\theoremstyle{plain}
\newtheorem{lemma}{Lemma}[section]
\newtheorem{proposition}[lemma]{Proposition}
\newtheorem{theorem}[lemma]{Theorem}
\newtheorem{corollary}[lemma]{Corollary}
\newtheorem*{conjecturenonum}{\textbf{Conjecture}}
\theoremstyle{definition}
\newtheorem{definition}[lemma]{Definition}
\newtheorem{remark}[lemma]{Remark}
\newcommand{\defeq}{\coloneq}
\newcommand{\st}{\,\,s.t.\,\,}
\newcommand{\R}{\mathbb{R}}
\date{}
\begin{document}

\title{Highly regular vertex-transitive graphs are globally rigid}
\author{Angelo El Saliby}
\address[Angelo El Saliby \orcidlinkc{0009-0007-3612-5648}]{Max Planck
Institute for Mathematics in the Sciences Leipzig, Germany }
\email{angelo.el.saliby@mis.mpg.de}

\keywords{global rigidity, vertex-transitive graph, rigidity matroid, weakly globally linked pairs}
\subjclass{52C25}

\begin{abstract}
A graph is said to be globally rigid in $d$-dimensional space
if almost all of its embeddings are unique up to isometries.
If a graph has enough automorphisms to send any of its vertices into any other, then it is called vertex-transitive.
We show that, in any dimension, highly regular vertex-transitive graphs are globally rigid,
positively answering a conjecture of Sean Dewar.
Furthermore, we construct examples that show that our constant for regularity is best possible.
\end{abstract}

\maketitle

\section{Introduction}

        Given a graph and an assignment of a point in $d$-dimensional space for every one of its vertices,
        graph rigidity asks whether there are finitely or infinitely many other equivalent realizations up to isometry,
        i.e. other assignments of points in $d$-dimensional space that preserve the edge lengths of edges in the graph,
        but that cannot be mapped one into another through an isometry of the ambient space. 
        When the starting realization is \emph{generic} this is a property of the graph itself,
        and we say that a graph is rigid if there are finitely many equivalent realizations to any given generic one,
        again up to isometry.
        If all generic realizations are unique up to isometry, we say that the graph is globally rigid.
        Efficient combinatorial characterizations of rigidity and global rigidity exist in dimension $1$ and $2$, 
        while finding combinatorial characterizations in dimension $d\geq 3$ is one of the main open problems in rigidity theory.
        As such, many results and classifications are known in the plane but not in higher dimensions.
        One such example arises in the study of vertex-transitive graphs,
        which are graphs whose local structure at each vertex is the same.
        Vertex-transitive graphs are regular,
        meaning that all the vertices have the same degree, 
        and it was proved in \cite{jackson2007} that a vertex-transitive graph of degree at least $6$ is globally rigid in the plane. 
        This leads to the following conjecture.
        \begin{conjecturenonum}[{\cite[Conjecture~20]{dewar2023}}]
	    There exists a function $f \colon \mathbb{N}\rightarrow \mathbb{N}$ with the following property.
        For $d\in \mathbb{N}$, if $G$ is a vertex-transitive graph of degree at least $f(d)$, then $G$ is globally rigid in $\mathbb{R}^d$.
        \end{conjecturenonum}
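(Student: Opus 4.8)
\emph{Reduction.} The plan is to split the statement along the lines of Hendrickson's programme and let the symmetry control each half. Recall that a generically rigid graph is globally rigid in $\mathbb{R}^d$ precisely when every pair of its vertices is \emph{globally linked}, i.e.\ the distance between the two vertices takes the same value in all generic frameworks equivalent to a given one; indeed, once all pairwise distances are pinned down the two point configurations are congruent. So it suffices to exhibit $f(d)$ for which degree at least $f(d)$ forces both (i) rigidity and (ii) global linkedness of every pair. Hendrickson's necessary conditions, namely $(d+1)$-connectivity and redundant rigidity, already fix the order of magnitude of $f(d)$ and, together with the tightness examples, should pin down its exact value; the work is entirely in the sufficiency.

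\emph{Connectivity as the lever.} First I would record that for vertex-transitive graphs the degree hypothesis upgrades to strong connectivity: by the theorems of Mader and Watkins a connected vertex-transitive graph of degree $k$ is $k$-edge-connected and at least $\tfrac{2}{3}(k+1)$-vertex-connected. In the plane this already closes the problem, since the Lov\'asz--Yemini theorem converts $6$-connectivity into redundant rigidity and one recovers the result of \cite{jackson2007}. For $d\geq 3$ no analogue of Lov\'asz--Yemini is available, which is exactly why vertex-transitivity must do genuine work here rather than merely supply a connectivity bound.

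\emph{The certificate.} For the sufficiency in general dimension I would run the globally-linked route, since that is what makes the symmetry bite. The decisive observation is that global linkedness is invariant under $\mathrm{Aut}(G)$, so vertex-transitivity collapses ``all pairs are globally linked'' to ``one fixed base vertex is globally linked to every other vertex''. I would then certify each such pair through the theory of \emph{weakly globally linked pairs}: the large degree guarantees, around any pair, a rich redundantly rigid substructure that renders the pair weakly globally linked (only a single good framework is required), and I would upgrade weak linkedness to genuine linkedness using a criterion valid for $\mathcal{R}_d$-connected graphs. A more hands-on alternative is to realize $G$ symmetrically, construct an equilibrium stress adapted to $\mathrm{Aut}(G)$ via its representation theory, show that the associated stress matrix has corank exactly $d+1$ as soon as the degree reaches $f(d)$, and then pass to a generic framework by lower semicontinuity of the maximal stress rank in order to invoke the Connelly--Gortler--Healy--Thurston characterization.

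\emph{Main obstacle and tightness.} The hard part is unavoidably the higher-dimensional sufficiency, because for $d\geq 3$ there is no combinatorial characterization of (global) rigidity to lean on. Concretely, I expect the two delicate points to be establishing rigidity and redundant rigidity without a Lov\'asz--Yemini-type theorem, and making the weak-to-global upgrade hold uniformly over all pairs at once; here the genericity-dependence of weak global linkedness is the subtle issue, and the transitive symmetry must be deployed with care since a generic framework is itself asymmetric. Finally, to show that the constant is best possible I would construct Cayley graphs of degree $f(d)-1$ admitting a partial reflection that yields a second, non-congruent generic realization, thereby exhibiting a pair that fails to be globally linked.
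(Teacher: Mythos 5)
There is a genuine gap: your proposal is a research plan whose central step is left unproven, and that step is precisely where all the difficulty lives. In the ``certificate'' paragraph you assert that ``the large degree guarantees, around any pair, a rich redundantly rigid substructure that renders the pair weakly globally linked,'' but you give no argument for this, and no such off-the-shelf criterion exists for $d\geq 3$; producing one is the entire content of the problem. Your fallback via symmetric realizations, equivariant stress matrices, and the Connelly--Gortler--Healy--Thurston characterization is likewise only a hope (and, as you yourself note, genericity destroys the symmetry you would want the stress to inherit). There is also a misdirection in the linkedness bookkeeping: by the theorem of Jord\'an and Vill\'anyi, global rigidity is \emph{equivalent} to every pair being weakly globally linked, so no ``upgrade from weak to genuine linkedness via $\mathcal{R}_d$-connectivity'' is needed -- and none is available in general, since weak global linkedness is genuinely framework-dependent. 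Finally, the tightness discussion is irrelevant to this statement, which only asks for the existence of \emph{some} function $f$.

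The missing idea that makes the conjecture short is a theorem you never invoke: Vill\'anyi (2023) proved that every vertex-$d(d+1)$-connected graph is globally rigid in $\mathbb{R}^d$. Combined with the connectivity bound you do cite (Watkins: a connected vertex-transitive graph of degree $\omega$ is at least $\tfrac{2}{3}\omega$-vertex-connected), the conjecture follows in two lines with $f(d)=\bigl\lceil \tfrac{3}{2}d(d+1)\bigr\rceil$: degree at least $\tfrac{3}{2}d(d+1)$ forces vertex-$d(d+1)$-connectivity, hence global rigidity. This is exactly how the paper verifies Dewar's conjecture (its \Cref{corollary:dewar-conjecture-through-villanyi-result}); the paper's main effort is then spent improving the constant to $d(d+1)$ by rerunning Vill\'anyi's argument (maximal counterexample, orbit-closure of weakly globally linked pairs, independence of ordering-induced subgraphs versus a counting lemma) in the vertex-transitive setting. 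Your plan gestures at some of these ingredients but carries out none of them, so as written it does not constitute a proof.
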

        Proving that a highly vertex-connected graph is globally rigid 
        was a decades long open conjecture in graph rigidity theory
        until \cite{villanyi2023}.
        There, the proof builds on recent developments on \emph{weakly globally linked pairs} from \cite{jordan2024}.
         Although a connected vertex-transitive graph might have a higher degree than its vertex-connectivity,
        it's vertex-connectivity is at least two thirds of its degree. 
        As an immediate consequence one can verify Dewar's conjecture.
        In this paper, we improve on this result by providing a tight constant:
        \begin{theorem}\label{theorem:main-theorem-in-introduction}
            If $G$ is a connected vertex-transitive graph of degree at least $d(d+1)$, 
            then $G$ is globally rigid in $\mathbb{R}^d$. 
            Moreover, there exist connected vertex-transitive graphs of degree $d(d+1)-1$ that are not globally rigid in $\mathbb{R}^d$.
        \end{theorem}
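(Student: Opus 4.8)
The plan is to prove the two assertions separately. For the sufficiency direction I would reduce global rigidity of $G$ to showing that \emph{every} pair of vertices of $G$ is globally linked in $\R^d$ — a condition which is equivalent to global rigidity once $G$ is known to be rigid, the latter serving as a convenient intermediate step. This is the natural formulation for applying the weakly globally linked machinery. Vertex-transitivity enters right away: the automorphism group of $G$ acts on the set of vertex pairs, and being globally linked is an automorphism-invariant property, so it suffices to verify the linkedness condition on a set of orbit representatives rather than on all pairs. In particular, the high degree and the estimate $\kappa(G)\ge \tfrac{2}{3}\deg(G)$ quoted above guarantee that $G$ is far from any small vertex cut, which removes the trivial (Hendrickson-type) obstructions to global rigidity and lets me concentrate on the matroidal ones.

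Having established that $G$ is rigid (indeed redundantly rigid, by controlling its rigid components via the degree bound), the substantive step is to upgrade this to global linkedness of every pair, for which I would invoke the theory of weakly globally linked pairs of \cite{jordan2024} together with the sufficient conditions extracted in \cite{villanyi2023}. Concretely, if some representative pair failed to be globally linked, this machinery would produce a structural obstruction of bounded size separating the pair; the heart of the argument is a counting estimate showing that in a vertex-transitive graph of degree at least $d(d+1)$ such an obstruction cannot occur, since each of the vertices involved — all equivalent under the group action — would have to send too many of its $d(d+1)$ incident edges across the separation. I expect the constant $d(d+1)$ to surface precisely as the threshold at which this count becomes tight, and I expect this tight estimate, rather than the invocation of the black-box results, to be the main obstacle: it is exactly the place where the naive connectivity argument (which only yields $f(d)=\tfrac32 c(d)$ for Vill\'anyi's connectivity threshold $c(d)$) must be replaced by a genuinely transitivity-aware computation.

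For sharpness I would exhibit an explicit vertex-transitive graph of degree $d(d+1)-1$ that is not globally rigid in $\R^d$, constructed as a Cayley graph or a symmetric cover so that vertex-transitivity is automatic. Since $d(d+1)-1$ is always odd (as $d(d+1)$ is a product of consecutive integers), the vertex count must be even, which already constrains and guides the construction. The design goal is to make the graph carry, in its $d$-dimensional rigidity matroid, exactly the separating obstruction that the sufficiency direction forbids by a single unit of degree; the failure of global rigidity would then be certified either directly, by producing a second generic realization with identical edge lengths, or, via the characterization of Gortler--Healy--Thurston, by showing that no generic equilibrium stress of $G$ attains the maximal rank $|V(G)|-d-1$. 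I expect the delicate part here to be the verification of non-global-rigidity — as opposed to merely writing down the graph and checking its degree and transitivity — and I expect this verification to mirror, in reverse, the extremal counting of the positive direction, thereby confirming that $d(d+1)$ is the true threshold.
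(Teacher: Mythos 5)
Your proposal assembles the right toolbox (weakly globally linked pairs, the Jord\'an--Vill\'anyi machinery, automorphism invariance, Hendrickson's conditions) but it is missing the one idea that actually makes the positive direction work, and in its place you posit a counting argument that does not exist and cannot give the right constant. The paper's proof is a maximal-counterexample argument: fix the vertex set $V$, suppose some vertex-transitive graph on $V$ of degree at least $d(d+1)$ fails to be globally rigid in $\mathbb{R}^d$, and choose such a graph $G$ with the maximum number of edges. If a weakly globally linked pair $\{u,v\}$ were non-adjacent, then by \Cref{lemma:weak-global-linkedness-is-orbit-closed} its entire orbit $O_G(\{u,v\})$ consists of weakly globally linked pairs, so by \Cref{theorem:global-rigidity-through-weak-global-linkedness} the graph $G+O_G(\{u,v\})$ is still not globally rigid; but it is still vertex-transitive and has strictly more edges, contradicting maximality. (Note that transitivity is used to add \emph{whole orbits} of edges while preserving transitivity --- not, as you propose, merely to restrict attention to orbit representatives, which by itself reduces nothing.) Once all weakly globally linked pairs are adjacent, \Cref{lemma:indepndence-of-pi-construction-globally-rigid-case} makes every ordering-induced subgraph $\mathcal{R}_d$-independent and \Cref{lemma:maximal-neighbour-cliques-when-all-weakly-globally-linked-pairs-are-adjacent} bounds intersections of maximal cliques in neighborhoods, so \Cref{theorem:expected-edges-of-pi-construction-for-low-clique-graphs} applies and produces a dependent ordering-induced subgraph --- a contradiction. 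Crucially, the constant $d(d+1)$ comes from \Cref{theorem:expected-edges-of-pi-construction-for-low-clique-graphs}, i.e.\ from Vill\'anyi's counting of ordering-induced subgraphs against Maxwell's bound; it requires no ``transitivity-aware computation'' at all. Your proposed heart --- that each vertex would have to send too many of its $d(d+1)$ incident edges across a bounded separation --- cannot work as stated: the vertex connectivity of a vertex-transitive graph is only guaranteed to be $\tfrac{2}{3}$ of its degree, so any separation-based count yields at best the weaker threshold $\tfrac{3}{2}d(d+1)$ of \Cref{corollary:dewar-conjecture-through-villanyi-result}, which is exactly the bound the theorem is meant to beat. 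Your intermediate claims (that $G$ is rigid, indeed redundantly rigid, by ``controlling its rigid components'') are likewise never established and are not how the argument proceeds.

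For the tightness half, your plan contains no actual construction and proposes to certify non-global-rigidity via Gortler--Healy--Thurston stress ranks or an explicit second realization, both of which would be difficult to execute for a concrete infinite family. The paper instead takes $s=d(d+1)$ disjoint copies of $K_{d(d+1)-1}$ joined by a perfect matching so that the result is vertex-transitive of degree $d(d+1)-1$ (\Cref{corollary:example-tightness-contstant}), and then shows with the rank bound of \Cref{lemma:cardinality-upper-bound-thorugh-partition-submodular-function} that deleting a single matching edge already destroys rigidity: the clique-plus-matching cover forces $r\bigl(E\setminus\{e\}\bigr)$ strictly below $d|V|-\binom{d+1}{2}$. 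Hence the graph is not redundantly rigid, and Hendrickson's necessary condition concludes. Your parity remark about $d(d+1)-1$ being odd is consistent with this construction but is no substitute for it or for the rank count.
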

        To prove the result for the improved constant, 
        we employ the same argument as in \cite{villanyi2023} with minor adaptations.
        To show that the constant we provide is best possible we generalize a result of \cite{jackson2007}.

        The paper is organized as follows. 
        In \Cref{section:weakly-globally-linked-pairs} we briefly introduce the rigidity matroid 
        and we recall some important definitions and results on weakly globally linked pairs from \cite{jordan2024}. 
        Furthermore, we discuss ordering-induced subgraphs and some criteria for their independence in the rigidity matroid,
         summarizing results from \cite{villanyi2023}. 
         In \Cref{section:vertex-transitive-graphs} we prove the main result of this paper.

\section{Weakly globally linked pairs}\label{section:weakly-globally-linked-pairs}
We briefly summarize the notions and results from \cite{villanyi2023} and \cite{jordan2024} that we need. 
We start by giving the usual definitions in graph rigidity. More details can be found in \cite{graver1993, jordan2016}.

Let $G=(V, E)$ be a simple, undirected graph on vertex set $V$ and edge set~$E$. 
If $v,w\in V$ are the endpoints of an edge in $E$, we write $vw\in E$. 
Fix $d\in\mathbb{N}$ such that $d\geq 1$. A function \begin{equation*}
p \colon V\rightarrow \mathbb{R}^d
\end{equation*}
    is called \emph{realization} of $G$ in $\mathbb{R}^d$.
    Such a realization can also be viewed as a point in $\bigl(\mathbb{R}^d\bigr)^V$.
    Define the \emph{rigidity map} $f_G \colon \bigl(\mathbb{R}^d\bigr)^V\rightarrow \mathbb{R}^E$ by \begin{equation*}
    \bigl(p_v\bigr)_{v\in V}\mapsto \bigl(\|p_v-p_w\|^2\bigr)_{vw\in E}.
    \end{equation*}
    Let $K_V$ denote the complete graph on $V$. We say that two realizations~$p$ and~$q$ of $G$ are \emph{equivalent} if $f_G(p)= f_G(q)$,
     while we say they are \emph{congruent} if $f_{K_V}(p) = f_{K_V}(q)$. 
     We say that the graph $G$ is \emph{globally rigid} if there exists an open dense subset $U$ of $\bigl(\mathbb{R}^d\bigr)^V$ such that for all $p\in U$ we have
     \begin{equation}
     f_G^{-1}\bigl(f_G(p)\bigr) = f_{K_V}^{-1}\bigl(f_{K_V}(p)\bigr).
    \end{equation}
The rigidity matroid on a realization $p$ of $K_V$ is the row matroid 
associated to the Jacobian of $f_{K_V}$, evaluated at $p$.
It can be shown that there exists an open dense subset of $\bigl(\mathbb{R}^d\bigr)^V$ such that the corresponding matroids are all isomorphic.
We call the arising matroid the \emph{rigidity matroid} of $V$ in $\mathbb{R}^d$, 
and we call any realization in this open dense subset \emph{generic}. 
We say that a graph $G=(V, E)$ is \emph{rigid} in $\mathbb{R}^d$ if $E$ has maximal rank in the rigidity matroid of $V$ in $\mathbb{R}^d$.

\subsection{Weakly globally linked pairs}
Most of the results and definitions in this section are summarized from \cite{jordan2024},
but the exposition and the selection of them is strongly reliant on \cite{villanyi2023}.
We refer to these papers for further details.
Weakly globally linked pairs adapt the combinatorial aspects of linked pairs to the global rigidity setting.
They were introduced in \cite{jordan2024} and they are a crucial ingredient in \cite{villanyi2023}.
\begin{definition}\label{definition:globally-linked-pairs-in-frameworks}
    Fix $d\in\mathbb{N}_{>0}$. Let $G =(V, E)$ be a graph and consider two distinct vertices $u, v\in V$.
    Let $p$ be a realization of $G$ in $\mathbb{R}^d$. We call the tuple $(G, p)$ a \emph{framework} (in $\mathbb{R}^d$).
    We say that the pair $u, v$ is \emph{globally linked} in $(G, p)$ in $\mathbb{R}^d$ if the set \begin{equation*}
        \Bigl\{\bigl\|q(v)-q(u)\bigr\|\text{ s.t.\ the framework } (G, q) \text{ is equivalent to } (G, p)\Bigr\}
    \end{equation*}
    contains only one element.
\end{definition}

Hence, a framework is globally rigid if and only if every pair of its vertices is globally linked. 
Global linkedness of pairs is not a generic property, 
in the sense that a pair can be globally rigid in a generic realization without being so in \emph{all} generic realizations.
This warrants the following definition.

\begin{definition}\label{definition:weakly-globally-linked}
    Fix $d\in\mathbb{N}_{>0}$. 
    A pair of vertices $u,v\in G$ is \emph{weakly globally linked} in $\R^d$ 
    if there exists a generic $d$-realization $p$ of $G$ such that $u$ and $v$ are globally linked in $(G,p)$.
    If $u$ and $v$ are not weakly globally linked, they are called \emph{globally loose}.
\end{definition}

Despite the previous remark, 
the following theorem shows that global rigidity is equivalent to every pair of vertices being weakly globally linked.
\begin{theorem}[{\cite[Lemma~3.2]{jordan2024}}] \label{theorem:global-rigidity-through-weak-global-linkedness}
Fix $d\in\mathbb{N}_{>0}$ and let $G=(V,E)$ be a graph. 
Then $G$ is globally rigid in $\R^d$ if and only if 
every pair of vertices of~$G$ is weakly globally linked in $\mathbb{R}^d$.
\end{theorem}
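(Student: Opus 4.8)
The plan is to prove the two implications separately, with the forward direction being essentially formal and the reverse direction carrying all of the content.

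For the forward direction, suppose $G$ is globally rigid in $\R^d$, and let $U$ be the open dense subset of $\bigl(\R^d\bigr)^V$ witnessing this. Since the generic realizations also form an open dense set $\Omega$, the intersection $U\cap\Omega$ is open dense, hence nonempty; fix any $p\in U\cap\Omega$. Because $p\in U$, every realization $q$ equivalent to $p$ satisfies $f_{K_V}(q)=f_{K_V}(p)$, i.e.\ is congruent to $p$, so $\bigl\|q(v)-q(u)\bigr\|=\bigl\|p(v)-p(u)\bigr\|$ for every pair $u,v$. Thus in the single generic framework $(G,p)$ every pair is globally linked, and by \Cref{definition:weakly-globally-linked} every pair is weakly globally linked in $\R^d$.

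For the reverse direction, assume every pair of vertices is weakly globally linked. Recall, as noted after \Cref{definition:globally-linked-pairs-in-frameworks}, that a framework is globally rigid exactly when all of its vertex pairs are globally linked. Comparing with the definition of global rigidity of $G$, it therefore suffices to produce a single open dense set on which every framework has all pairs globally linked. First I would prove the following upgrade: if a pair $u,v$ is weakly globally linked, then the set
\begin{equation*}
S_{uv}\defeq\bigl\{\,p\in\Omega : u,v \text{ is globally linked in }(G,p)\,\bigr\}
\end{equation*}
contains an open dense subset of $\bigl(\R^d\bigr)^V$. Granting this, the finite intersection $\bigcap_{u,v}S_{uv}$ over all $\binom{|V|}{2}$ pairs still contains an open dense set $U$; for every $p\in U$ all pairs are globally linked, so $(G,p)$ is globally rigid, which is precisely the fiber condition $f_G^{-1}\bigl(f_G(p)\bigr)=f_{K_V}^{-1}\bigl(f_{K_V}(p)\bigr)$. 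Hence $G$ is globally rigid in $\R^d$.

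The hard part is this upgrade from a single witnessing generic realization to an open dense set, which is exactly where the non-genericity of global linkedness must be confronted, since a pair may be globally linked at one generic realization without being so at every generic realization. The approach I would take is to study the semialgebraic set of pairs $(p,t)$ with $p\in\Omega$ and $t=\bigl\|q(v)-q(u)\bigr\|$ for some realization $q$ equivalent to $p$: global linkedness at $p$ says that the fiber over $p$ is a single point, i.e.\ the number $N(p)$ of distinct achievable values of $\bigl\|q(v)-q(u)\bigr\|$ equals $1$. The structural input I would rely on is that $N$ is controlled by the algebraic geometry of the set of realizations equivalent to a generic one, in the complex-solution-counting spirit underlying generic global rigidity, so that $N$ takes a constant value on an open dense subset of $\Omega$. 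The crux is then to show that a weakly globally linked witness, which realizes $N(p)=1$ at a generic point, actually attains this open dense value rather than sitting on a thinner exceptional subvariety; once this is established, $N\equiv 1$ on an open dense set and the upgrade follows. Ruling out the exceptional behaviour is the main obstacle and the technical heart of the argument.
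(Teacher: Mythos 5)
The paper never proves this statement: it is imported verbatim from \cite{jordan2024} (their Lemma~3.2), so your attempt has to be measured against the proof in that reference. Your forward direction is correct and complete: intersecting the open dense set witnessing global rigidity with the open dense set $\Omega$ of generic realizations produces a generic $p$ at which every pair is globally linked, hence every pair is weakly globally linked.

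The reverse direction, however, contains a genuine gap, and you flag it yourself: the entire argument rests on the ``upgrade'' claim that for a weakly globally linked pair $u,v$ the set $S_{uv}$ contains an open dense subset of $\bigl(\R^d\bigr)^V$, and for this you give only a plan whose crux (``ruling out the exceptional behaviour'') is left unproven. Moreover, the structural input you want to lean on --- that the count $N(p)$ of achievable values of $\|q(v)-q(u)\|$ is constant on an open dense subset of $\Omega$ --- is not a known fact and is doubtful as stated: $\Omega$ is the complement of a real algebraic hypersurface and is in general disconnected, and counts of this kind (for instance the number of congruence classes of realizations equivalent to a generic one, already for the triangular prism in the plane) genuinely differ between connected components. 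This component-dependence is exactly the phenomenon that makes global linkedness a non-generic property, so your plan amounts to re-deriving, pair by pair and for a property known \emph{not} to be generic, a single-witness-to-open-dense transfer that there is no reason to expect.

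What is missing is the theorem of Gortler, Healy and Thurston that global rigidity \emph{itself} is a generic property: if one generic framework $(G,p)$ is globally rigid, then $G$ is globally rigid. With this tool no open-dense upgrade is needed, and the argument of \cite{jordan2024} closes by induction on the non-edges of $G$. Weak global linkedness is monotone under adding edges (the same generic witness works, since realizations equivalent for the larger graph form a subset of those equivalent for the smaller one), so all pairs remain weakly globally linked in $G+uv$ and, by induction from the base case $K_V$, the graph $G+uv$ is globally rigid. Now take a generic $p$ at which the non-adjacent pair $u,v$ is globally linked in $(G,p)$: any $q$ equivalent to $(G,p)$ satisfies $\|q(u)-q(v)\|=\|p(u)-p(v)\|$, hence is equivalent to $(G+uv,p)$, hence congruent to $p$ because $G+uv$ is globally rigid and $p$ is generic. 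Thus the single generic framework $(G,p)$ is globally rigid, and the Gortler--Healy--Thurston theorem upgrades this to global rigidity of $G$. Without this input (or an equivalent algebro-geometric substitute), your reverse implication cannot be completed.
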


Weak global linkedness of pairs is a combinatorial property 
and is hence invariant under automorphisms of the graph,
as we make precise in the following lemma. 
The proof is immediate and is hence omitted.
\begin{lemma}\label{lemma:weak-global-linkedness-is-orbit-closed}
    Fix $d\in\mathbb{N}_{>0}$ and let $G=(V, E)$ be a graph.
     Let 
     \begin{equation*}
        W_d(G)\defeq \left\{\begin{array}{ll}
                             \{u,v\}\st
        u, v\in V\text{ and }
        u, v \text{ are } \\\text{weakly globally linked in $G$ in $\mathbb{R}^d$}
                            \end{array}
\right\}.
    \end{equation*}
    For any $e\in E(K_V)$ denote by~$O_G(e)$ the orbit of $e$ under the automorphisms group of $G$.
    Then for any $e\in E(K_V)$ we have that \begin{equation*}
        e\in W_d(G)\iff O_G(e)\subseteq W_d(G).
    \end{equation*}
\end{lemma}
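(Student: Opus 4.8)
The plan is to show that weak global linkedness is preserved under any automorphism of $G$, from which the orbit statement follows immediately. The key observation is that an automorphism $\varphi$ of $G$ induces, for any realization $p$, a new realization $p\circ\varphi^{-1}$, and this correspondence interacts well with the rigidity map $f_G$ and the congruence map $f_{K_V}$. Concretely, I would first record that if $\varphi\colon V\to V$ is a graph automorphism and $e=\{u,v\}$, then relabelling the vertices of any realization by $\varphi$ sends generic realizations to generic realizations (genericity is a Zariski-open condition invariant under coordinate permutations) and sends equivalent frameworks to equivalent frameworks. The goal is then to verify that $\{u,v\}$ is globally linked in $(G,p)$ if and only if $\{\varphi(u),\varphi(v)\}$ is globally linked in $(G, p\circ\varphi^{-1})$.

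The main step is the following bijection. Let $\varphi$ be an automorphism of $G$ and set $p'\defeq p\circ\varphi^{-1}$. Since $\varphi$ permutes the edges of $G$, for any realization $q$ we have that $(G,q)$ is equivalent to $(G,p)$ if and only if $(G, q\circ\varphi^{-1})$ is equivalent to $(G, p')$; indeed the multiset of squared edge lengths is merely reindexed by $\varphi$. Moreover the quantity $\bigl\|q(v)-q(u)\bigr\|$ equals $\bigl\|(q\circ\varphi^{-1})(\varphi(v))-(q\circ\varphi^{-1})(\varphi(u))\bigr\|$. Hence the set of distances
\begin{equation*}
    \Bigl\{\bigl\|q(v)-q(u)\bigr\|\st (G,q)\text{ equivalent to }(G,p)\Bigr\}
\end{equation*}
coincides with the corresponding set of distances between $\varphi(u)$ and $\varphi(v)$ ranging over frameworks equivalent to $(G,p')$. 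In particular one set is a singleton precisely when the other is, so $\{u,v\}$ is globally linked in $(G,p)$ exactly when $\{\varphi(u),\varphi(v)\}$ is globally linked in $(G,p')$.

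With this bijection in hand the lemma is immediate. If $\{u,v\}$ is weakly globally linked, there is a generic $p$ witnessing global linkedness; then for any automorphism $\varphi$ the realization $p'=p\circ\varphi^{-1}$ is generic and witnesses that $\{\varphi(u),\varphi(v)\}$ is weakly globally linked. Thus weak global linkedness is constant on each orbit $O_G(e)$, which is exactly the equivalence $e\in W_d(G)\iff O_G(e)\subseteq W_d(G)$ (the reverse implication being trivial since $e\in O_G(e)$).

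I do not expect a serious obstacle here, which is why the paper calls the proof immediate; the only point requiring a moment's care is confirming that relabelling preserves genericity, but this follows because permuting coordinate blocks of $(\mathbb{R}^d)^V$ is a linear isomorphism carrying the generic Zariski-open set to itself. The content of the lemma is purely that the defining conditions of \Cref{definition:weakly-globally-linked} are phrased in terms of the graph $G$ together with distances, both of which are manifestly automorphism-invariant.
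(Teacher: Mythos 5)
Your proposal is correct and is precisely the ``immediate'' argument the paper has in mind when it omits the proof: an automorphism $\varphi$ induces the relabelling $p\mapsto p\circ\varphi^{-1}$, which preserves genericity, reindexes $f_G$ so as to preserve equivalence of frameworks, and carries the distance set for $\{u,v\}$ bijectively onto that for $\{\varphi(u),\varphi(v)\}$. No gap; this matches the intended proof.
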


The following result is contained in the proof of \cite[Theorem~1.2]{villanyi2023}.
\begin{lemma}[\cite{villanyi2023}]\label{lemma:maximal-neighbour-cliques-when-all-weakly-globally-linked-pairs-are-adjacent}
    Fix $d\in\mathbb{N}$ with $d\geq 2$ and let $G=(V,E)$ be a graph.
    Assume that $G$ is vertex-$(d+1)$-connected and that all weakly globally linked pairs in $\mathbb{R}^d$ are adjacent in $G$.
    Then for every vertex $v\in V$,
    if $V_1, V_2\subseteq N_G(v)$ are distinct maximal cliques in $G\bigl[N_G(v)\bigr]$,
    we have that $|V_1\cap V_2| < d-1$.
\end{lemma}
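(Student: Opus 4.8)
The plan is to argue by contradiction: I would fix a vertex $v$ and two distinct maximal cliques $V_1, V_2$ of $G[N_G(v)]$ with $|V_1 \cap V_2| \geq d-1$, and then exhibit a non-adjacent pair that is nevertheless weakly globally linked in $\R^d$, contradicting the hypothesis that all weakly globally linked pairs are adjacent. The first step is purely combinatorial. Since $V_1$ and $V_2$ are distinct maximal cliques, neither contains the other, so I can pick $a \in V_1 \setminus V_2$. By maximality of $V_2$ there is some $b \in V_2$ with $ab \notin E$ (otherwise $V_2 \cup \{a\}$ would be a larger clique), and such a $b$ cannot lie in $V_1$, since $a, b \in V_1$ would force $ab \in E$. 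Thus $a \in V_1 \setminus V_2$, $b \in V_2 \setminus V_1$, and $ab \notin E$. Writing $W \defeq V_1 \cap V_2$, each $w \in W$ is adjacent to $a$ (both lie in the clique $V_1$) and to $b$ (both lie in $V_2$), while $v$ is adjacent to both $a$ and $b$; since $v \notin W$, the pair $a, b$ has at least $|W| + 1 \geq d$ common neighbours, and $\{v\} \cup W$ is a clique every vertex of which is adjacent to both $a$ and $b$. Equivalently, $G[\{a,b\} \cup \{v\} \cup W]$ is a copy of $K_{d+2}$ with the single edge $ab$ deleted.

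The remaining task is to show that this non-adjacent pair $a, b$ is weakly globally linked, which would complete the contradiction. I would first dispose of the case in which $a, b$ have at least $d+1$ common neighbours, in particular the case $|W| \geq d$: by trilateration, $d+1$ generic common neighbours affinely span $\R^d$ and hence determine the positions of $a$ and of $b$ relative to them in every generic realization, so $\|p(a) - p(b)\|$ is fixed and the pair is even globally linked. The genuinely delicate situation is $|W| = d-1$ exactly, where $a, b$ have precisely the $d$ common neighbours $\{v\} \cup W$ and trilateration just fails. I expect this boundary case to be the crux of the argument, and it is precisely here that the hypothesis of vertex-$(d+1)$-connectivity must enter; note that it is also what separates the sharp bound $|V_1 \cap V_2| < d-1$ from the weaker bound $|V_1 \cap V_2| < d$ that trilateration alone would yield.

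For this case I would fix a generic realization $p$ of $G$ and let $H$ be the hyperplane affinely spanned by $p(\{v\} \cup W)$, noting that generically $p(a), p(b) \notin H$. The only mechanism by which an equivalent realization can change the distance between $a$ and $b$ is by reflecting the part of the framework containing $a$ across $H$ while fixing the part containing $b$; such a reflection preserves the length of an edge crossing between the two parts only when that edge has an endpoint on $H$, that is, in $\{v\} \cup W$. Hence it can produce an inequivalent value of $\|p(a) - p(b)\|$ only if $\{v\} \cup W$ separates $a$ from $b$ in $G$. But $|\{v\} \cup W| = d$ and $G$ is vertex-$(d+1)$-connected, so no such separator exists and this reflection obstruction is ruled out. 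Turning this observation into the assertion that $a, b$ are actually weakly globally linked is exactly where I would appeal to the weakly globally linked machinery of \cite{jordan2024} and \cite{villanyi2023}, which supplies, for a pair with $d$ common neighbours forming a clique and not separated by them, a generic realization in which the pair is globally linked. The main obstacle is therefore not the extraction of $a, b$, but confirming that this single hyperplane reflection is the sole obstruction to global linkedness — so that the connectivity bound genuinely forces weak global linkedness rather than merely excluding the most obvious competing realization.
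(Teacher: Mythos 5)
Your proposal is correct and follows essentially the same route as the proof this lemma actually receives in \cite{villanyi2023} (the present paper only cites it, giving no proof of its own): the combinatorial extraction of a non-adjacent pair $a,b$ together with the clique $\{v\}\cup W$ of $d$ common neighbours, i.e.\ a copy of $K_{d+2}$ minus the edge $ab$, is exactly the first step there, and the crux you defer is precisely the theorem of \cite{jordan2024} that if $G$ is $(d+1)$-connected (indeed $\kappa_G(a,b)\geq d+1$ suffices), $ab\notin E$, and $G+ab$ has a globally rigid subgraph containing both $a$ and $b$ --- here the complete graph $K_{d+2}$ --- then $a,b$ are weakly globally linked in $\mathbb{R}^d$, which yields the desired contradiction. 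One caveat: your trilateration disposal of the case $|W|\geq d$ is valid only because the $d+1$ common neighbours $\{v\}\cup W$ form a clique, so that their configuration is congruent to the generic one in every equivalent realization; $d+1$ common neighbours alone would not determine anything, and it is worth noting that the cited theorem covers this case anyway, making the case split unnecessary.
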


\subsection{Ordering-induced subgraphs}

Ordering induced subgraphs were introduced in \cite{villanyi2023} and 
they play a central role in the proof of the main theorems of that paper. 
Here, we briefly recall the construction and state the main results from \cite{villanyi2023} that we need. 
The main result asserts that if in graph all weakly globally linked pairs are adjacent,
then all ordering-induced subgraphs are independent in the rigidity matroid (\Cref{lemma:indepndence-of-pi-construction-globally-rigid-case}).

\begin{definition}\label{definition:ordering-of-graph}
    Let $V$ be a finite set of cardinality $n$.
    An \emph{ordering} of~$V$ is an ordered tuple $\pi = (v_1, \dots, v_n)$ such that $V= \{v_1, \dots, v_n \}$.
    To every ordering $\pi$, we associate an order relation on $V$, denoted $<_{\pi}$.
    For every $u, w\in V$, the order relation $<_{\pi}$ is defined by \begin{equation*}
        v<_{\pi} w \iff \exists i,j\in \{1,\dots, n\} \st v= v_i, w= v_j \text{ and } i<j.
    \end{equation*}
    Let $G=(V, E)$ be a graph and let $\pi = (v_1, \dots, v_n)$ be an ordering of~$V$.
    For any $v\in V$, we define the \emph{ordered parents} of $v$ in $G$ as  \begin{equation*}
        N_{G, \pi}(v)\defeq N_G(v)\cap \{w\in V\st w<_{\pi} v\}.
    \end{equation*}
    Furthermore, we define the \emph{ordered degree} of $v$ as
    \begin{equation*}
        \deg_{G, \pi}(v)\defeq \bigl|N_{G, \pi}(v)\bigr|.
    \end{equation*}
\end{definition}

\begin{definition}\label{definition:pi-subgraph-in-Rd}
Fix $d\in\mathbb{N}$ with $d\geq 2$.
Let $G=(V, E)$ be a graph and let $\pi$ be an ordering of~$V$.
We define a family of subgraphs of~$G$ induced by $\pi$.
We call any of these subgraphs a \emph{$\pi$-subgraph} of $G$ in~$\mathbb{R}^d$.
Let $n\defeq |V|$. A $\pi$-subgraph of $G$ is a graph of the form $G_{\pi}^n$,
where~$G_{\pi}^n$ is iteratively constructed as follows.
Set $G_\pi^0 \defeq (\emptyset,  \emptyset)$.
For any $i\in\{0,\dots, n-1\}$,
to obtain $G_{\pi}^{i+1}$ from~$G_{\pi}^i$, let $H^i\defeq G_\pi^{i}+v_i$ and consider $N_{G, \pi}(v_i)$ and $\deg_{G, \pi}(v_i)$.
Then:
\begin{enumerate}
            \item if $\deg_{G, \pi}(v_i)\leq d$, the graph $G_{\pi}^{i+1}$ is obtained from $H^i$ by connecting $v_i$ to all vertices in $N_{G, \pi}(v_i)$;
            \item if $\deg_{G, \pi}(v_i)>d$ and $N_{G,\pi}(v_i)$ induces a clique in $G$,
            the graph $G_{\pi}^{i+1}$ is obtained from $H^i$ by connecting $v_i$ to $d$ vertices chosen from $N_{G, \pi}(v_i)$;
            \item if $\deg_{G, \pi}(v_i)>d$ and $N_{G,\pi}(v_i)$ does not induces a clique in $G$,
            the graph $G_{\pi}^{i+1}$ is obtained from $H^i$ by connecting $v_i$ to $d+1$ distinct vertices chosen in $N_{G, \pi}$,
            subject to the constraint that at least two of those $d+1$ vertices are not adjacent in $G$.
        \end{enumerate}
A graph that is isomorphic to a $\pi$-subgraph of $G$, for some ordering $\pi$ of $V$, is called \emph{ordering-induced} subgraph of $G$.
\end{definition}

The following two results are at the heart of the proofs of the main theorems of \cite{villanyi2023}. In the same way, they will ultimately allow us to reach a contradiction in the proof of \Cref{theorem:highly-regular-vertex-transitive-graphs-are-globally-rigid}.
The statement of \Cref{theorem:expected-edges-of-pi-construction-for-low-clique-graphs} is slightly different and it is obtained by applying Maxwell's necessary condition to \cite[Lemma~3.2]{villanyi2023}.

\begin{lemma}[{\cite[Lemma~4.1]{villanyi2023}}]\label{lemma:indepndence-of-pi-construction-globally-rigid-case}
    Fix $d\in\mathbb{N}$ with $d\geq 2$, let $G=(V, E)$ be a graph and let $\mathcal{R}_d$ be the rigidity matroid on $V$ in $\mathbb{R}^d$.
    Assume that all weakly globally linked pairs in $G$ are also adjacent in $G$,
    then any ordering-induced subgraph of $G$ is $\mathcal{R}_d$-independent.

\end{lemma}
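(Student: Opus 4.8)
The plan is to prove this by strong induction on the number of vertices, following the iterative construction in \Cref{definition:pi-subgraph-in-Rd}. The statement to prove is \Cref{lemma:indepndence-of-pi-construction-globally-rigid-case}: under the hypothesis that all weakly globally linked pairs in $G$ are adjacent, every ordering-induced subgraph is independent in the rigidity matroid $\mathcal{R}_d$. Since an ordering-induced subgraph is a $\pi$-subgraph $G_\pi^n$ for some ordering $\pi$, I would show that each $G_\pi^{i+1}$ is $\mathcal{R}_d$-independent assuming $G_\pi^i$ is. The base case $G_\pi^0 = (\emptyset, \emptyset)$ is trivially independent.

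For the inductive step, I would analyze the three cases of the construction separately, since they add a different number of edges incident to the new vertex $v_i$. The key underlying fact is the standard one from rigidity theory: adding a new vertex joined by $k \leq d$ edges to an independent set preserves independence whenever the new vertex is placed generically, because the $k$ new rows of the rigidity matrix have support on fresh coordinates and are linearly independent of the old rows as long as $k \leq d$ (a generic point in $\mathbb{R}^d$ has $d$ degrees of freedom). This immediately dispatches case~(1), where $\deg_{G,\pi}(v_i) \leq d$, and case~(2), where exactly $d$ edges are added. The genuinely delicate situation is case~(3), where $d+1$ edges are added to $v_i$ — one more than the dimension allows for a free vertex — so independence can fail and the added structure (that at least two of the $d+1$ neighbors are non-adjacent in $G$) must be exploited.

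The heart of case~(3) is where the hypothesis on weakly globally linked pairs enters. I expect the argument to proceed by contradiction: if appending $v_i$ with its $d+1$ edges created a dependency, then the $d+1$ chosen neighbors of $v_i$ together with $v_i$ would span a circuit, forcing those $d+1$ neighbors to behave rigidly among themselves; concretely, a generic realization of $G_\pi^i$ extended to $v_i$ would globally link the pair of non-adjacent neighbors guaranteed by the construction. This would make that non-adjacent pair weakly globally linked in $G$, contradicting the standing hypothesis that all weakly globally linked pairs are adjacent. Making this rigorous requires relating a dependency in $\mathcal{R}_d$ to global linkedness of a specific pair, which is precisely the bridge supplied by the theory of weakly globally linked pairs recalled in \Cref{section:weakly-globally-linked-pairs} and, at a deeper level, by the companion lemma \cite[Lemma~3.2]{villanyi2023}.

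The main obstacle, then, is case~(3): one must carefully show that a dependency among the $d+1$ edges of the star at $v_i$ over the independent base $G_\pi^i$ produces a \emph{global} linkage — not merely local rigidity — of the distinguished non-adjacent pair. The subtlety is that independence is a statement about the \emph{generic} rigidity matroid, whereas weak global linkedness requires producing a generic realization witnessing unique realizability of a distance; bridging these two notions is exactly what \cite{villanyi2023} engineers, and I would invoke that machinery rather than reprove it. The non-adjacency constraint built into case~(3) is what makes the resulting linked pair a genuine \emph{counterexample} to the hypothesis, so the logical structure closes only because the construction deliberately forces two non-neighbors among the $d+1$ parents.
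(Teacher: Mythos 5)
Note first that this paper contains no proof of this lemma: it is imported as a black box from \cite[Lemma~4.1]{villanyi2023}, so the only proof your attempt can be measured against is the one in that reference. Your skeleton does match it: induction along the ordering, cases (1) and (2) of \Cref{definition:pi-subgraph-in-Rd} dispatched by the standard fact that attaching a vertex by at most $d$ edges ($0$-extension) preserves independence, and a contradiction argument in case (3) that turns a dependence into a weakly globally linked non-adjacent pair, violating the hypothesis.

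The gap is that your case (3) --- which is the entire content of the lemma --- is left to unspecified ``machinery'', and the machinery you point to is the wrong one. Concretely: (i) a dependence does not force ``the $d+1$ neighbours of $v_i$ to behave rigidly among themselves''; what one actually extracts is that any $\mathcal{R}_d$-circuit $C$ of $G_\pi^{i+1}$ must contain $v_i$ together with \emph{all} $d+1$ of its edges (every vertex of an $\mathcal{R}_d$-circuit has degree at least $d+1$), hence $C$ is a circuit of $G$ itself (since $G_\pi^{i+1}$ is a subgraph of $G$) containing the non-adjacent pair $u,w$ among its typically many vertices; such a circuit need not be rigid at all. (ii) The bridge from ``$u,w$ lie in a common circuit of $G$'' to ``$u,w$ are weakly globally linked in $G$'' is \emph{not} supplied by \cite[Lemma~3.2]{villanyi2023}, which is the counting lemma recalled here as \Cref{theorem:expected-edges-of-pi-construction-for-low-clique-graphs} and plays the opposite role (producing a dependent ordering-induced subgraph); nor is it among the results recalled in \Cref{section:weakly-globally-linked-pairs}. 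It is a main theorem of the companion paper \cite{jordan2024}, and invoking ``the machinery of \cite{villanyi2023}'' to prove Lemma~4.1 of \cite{villanyi2023} is circular. (iii) Most importantly, the bridge is genuinely delicate because its naive versions are false, so one cannot wave at it: for $G=K_5-uv$ in $\mathbb{R}^3$, the graph $G+uv=K_5$ is an $\mathcal{R}_3$-circuit through $uv$, yet $u,v$ are globally loose in $G$ (reflecting one degree-$3$ vertex through the plane of its three neighbours changes $\lVert u-v\rVert$ in every generic realization); and the double banana is itself an $\mathcal{R}_3$-circuit in which a pair of vertices taken from the two different bananas is globally loose, since the flex varies their distance continuously. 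Hence the correct statement from \cite{jordan2024} requires, beyond the circuit containing both vertices as a subgraph, a connectivity-type condition (of the flavour of $d+1$ internally disjoint $u$--$w$ paths), and identifying that statement and verifying its hypotheses in the inductive setting is precisely the missing content of your case (3).
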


\begin{lemma}[{\cite[Lemma~3.2]{villanyi2023}}]\label{theorem:expected-edges-of-pi-construction-for-low-clique-graphs}
Fix $d\in\mathbb{N}$ with $d\geq 2$. Let $G=(V, E)$ be a graph such that, for every $v\in V$, we have:
\begin{enumerate}
    \item\label{item:minimal-degree-probabilistic-lemma} $\deg_G(v)\geq d(d+1)$;
    \item\label{item:no-neighborhood-cliques-probabilistic-lemma} $N_G(v)$ is not a clique;
    \item\label{item:maximal-cliques-in-neighborhood-probabilistic-lemma} if $H_1, H_2\subseteq N_G(v)$ are vertex sets of two  maximal cliques in $G\bigl[N_G(v)\bigr]$
    and $H_1\neq H_2$ then $|H_1\cap H_2|\leq d-2$.
\end{enumerate}
Then at least one ordering-induced subgraph of $G$ is not $\mathcal{R}_d$-independent.
\end{lemma}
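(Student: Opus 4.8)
The plan is to run a first-moment argument over a uniformly random ordering $\pi$ of $V$ and compare the resulting edge count with Maxwell's sparsity bound. The first observation is that, although the $\pi$-subgraph $G_\pi^n$ of \Cref{definition:pi-subgraph-in-Rd} is not uniquely determined by $\pi$, the number of edges added at each step is: it equals $\deg_{G,\pi}(v_i)$ in case (1), equals $d$ in case (2), and equals $d+1$ in case (3), regardless of the choices made. Hence $|E(G_\pi^n)| = \sum_{v\in V} e_v$ is a well-defined random variable, where $e_v$ is the number of edges contributed by $v$. Recall that Maxwell's necessary condition states that an $\mathcal{R}_d$-independent graph on $n\geq d+1$ vertices has at most $dn-\binom{d+1}{2}$ edges. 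Since every vertex has degree at least $d(d+1)$, we have $n\geq d(d+1)+1>d+1$, so it suffices to exhibit an ordering with $|E(G_\pi^n)|>dn-\binom{d+1}{2}$; as $\max\geq\mathbb{E}$, it is enough to show $\mathbb{E}\bigl[|E(G_\pi^n)|\bigr]\geq dn$, and for that it suffices to prove $\mathbb{E}[e_v]\geq d$ for every $v\in V$.

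To estimate $\mathbb{E}[e_v]$ I would use the identity $e_v=\min\bigl(\deg_{G,\pi}(v),d\bigr)+\mathbf{1}[\,v\text{ falls into case (3)}\,]$, which holds by inspecting the three cases. Writing $m\defeq\deg_G(v)$ and noting that $\deg_{G,\pi}(v)$ is uniformly distributed on $\{0,1,\dots,m\}$ (it is determined by the relative order of $v$ and its $m$ neighbours), a direct summation gives $\mathbb{E}\bigl[\min(\deg_{G,\pi}(v),d)\bigr]=\frac{d(2m-d+1)}{2(m+1)}$. Substituting, the target inequality $\mathbb{E}[e_v]\geq d$ becomes $P(\text{case (3)})\geq\frac{d(d+1)}{2(m+1)}$. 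Since $P(\deg_{G,\pi}(v)>d)=\frac{m-d}{m+1}$ and case (3) is the event $\{\deg_{G,\pi}(v)>d\}$ with case (2) removed, this is equivalent to the upper bound $P(\text{case (2)})\leq\frac{2m-d^2-3d}{2(m+1)}$.

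The heart of the argument, and the step I expect to be the main obstacle, is this bound on $P(\text{case (2)})$, the probability that $v$ has more than $d$ ordered parents inducing a clique; here all three hypotheses enter. If the ordered parents form a clique of size $>d$, then by the overlap hypothesis (distinct maximal cliques of $G[N_G(v)]$ meeting in at most $d-2<d+1$ vertices) they lie in a \emph{unique} maximal clique $C_j$, so $P(\text{case (2)})=\sum_{j:\,|C_j|>d}P\bigl(S\subseteq C_j,\ |S|>d\bigr)$ is a sum over \emph{disjoint} events, where $S$ is the set of ordered parents. For a single clique $C$ of size $c$ one computes $P(S\subseteq C,\,|S|>d)=\frac{c!}{(m+1)!}\sum_{t=d+1}^{c}\frac{(m-t)!}{(c-t)!}$; the event forces $v$ to precede each of the $m-c\geq 1$ neighbours outside $C$ (the inequality $m-c\geq 1$ being exactly the non-clique hypothesis), which is what keeps each term small. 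The remaining work is to sum these contributions over all large maximal cliques and verify $P(\text{case (2)})\leq\frac{2m-d^2-3d}{2(m+1)}$, using both the $\le d-2$ overlap and $m\geq d(d+1)$. This is where the constant $d(d+1)$ is pinned down: for $d=2$, $m=6$, with $N_G(v)$ a clique of size $5$ together with one isolated neighbour, one gets $P(\text{case (2)})=\tfrac{1}{7}$ and $\mathbb{E}[e_v]=d$ exactly, so the estimate is tight and fails below this degree.

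Granting this bound, $\mathbb{E}[e_v]\geq d$ holds for every $v$, hence $\mathbb{E}\bigl[|E(G_\pi^n)|\bigr]\geq dn$ and some ordering $\pi$ satisfies $|E(G_\pi^n)|\geq dn>dn-\binom{d+1}{2}$. By Maxwell's necessary condition the corresponding $\pi$-subgraph, which is an ordering-induced subgraph of $G$, cannot be $\mathcal{R}_d$-independent, as required.
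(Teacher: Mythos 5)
Your overall strategy---a first-moment argument over a uniformly random ordering followed by Maxwell's sparsity bound---is the right one. In fact, the paper itself gives no proof of this lemma: it imports it from \cite[Lemma~3.2]{villanyi2023}, noting only that the stated form follows by applying Maxwell's necessary condition, and Vill\'anyi's proof of that lemma is precisely such a probabilistic argument. Everything you actually establish is correct: the edge count of a $\pi$-subgraph does not depend on the choices made in the construction; $\deg_{G,\pi}(v)$ is uniform on $\{0,\dots,m\}$; $\mathbb{E}\bigl[\min(\deg_{G,\pi}(v),d)\bigr]=\frac{d(2m-d+1)}{2(m+1)}$; the problem reduces to showing $P(\text{case }(2))\leq\frac{2m-d^2-3d}{2(m+1)}$; the case-$(2)$ event decomposes into \emph{disjoint} events indexed by the maximal cliques of $G[N_G(v)]$ of size at least $d+1$; and your per-clique probability is right (by a hockey-stick summation it even has the closed form $\frac{1}{m+1}\binom{m-d}{c-d-1}\big/\binom{m}{c}$ for a clique of size $c$). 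Your tightness check at $d=2$, $m=6$, $c=5$ is also correct.

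The gap is that the step you explicitly defer (``the remaining work is to sum these contributions over all large maximal cliques and verify\dots'') is the entire mathematical content of the lemma, and it is not a routine verification: your exact computation covers only the single-clique case. For several cliques, the obvious way to exploit hypothesis (3)---each $(d-1)$-subset of $N_G(v)$ lies in at most one maximal clique, hence $\sum_j\binom{c_j}{d-1}\leq\binom{m}{d-1}$---is provably insufficient. Writing $f(c)\defeq\binom{m-d}{c-d-1}\big/\binom{m}{c}$ for $(m+1)$ times the per-clique probability, that counting constraint alone is compatible with a total as large as $f(m-1)\cdot\binom{m}{d-1}\big/\binom{m-1}{d-1}=\frac{(m-d)(m-d-1)}{2(m-d+1)}$, and at $m=d(d+1)$ this exceeds the target $m-\frac{d(d+3)}{2}=\frac{d(d-1)}{2}$ by $\frac{d(d-1)^2}{2(d^2+1)}>0$. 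So no linear bookkeeping of clique sizes closes the argument; a correct completion must use the actual set structure---for instance, that a maximal clique of size $m-1$ forces every other maximal clique of $G[N_G(v)]$ to have size at most $d-1$ (at most $d-2$ vertices inside it plus at most one outside), hence to contribute nothing---and then balance the ``one huge clique'' regime against the ``several medium cliques'' regime. Since the inequality is exactly tight at $m=d(d+1)$, there is no slack to absorb a lossy estimate. As it stands, your proposal is a correct reduction of the lemma to its hardest step, not a proof of it.
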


\section{Vertex-transitive graphs}\label{section:vertex-transitive-graphs}

The paper \cite{jackson2007} provides a characterization of vertex-transitive globally rigid graphs in $\mathbb{R}^2$,
and in particular it shows that every vertex-transitive graph of degree at least~$6$ is globally rigid in the plane.
In this section, we generalize this result and show that a vertex-transitive graph of regularity at least $d(d+1)$ is globally rigid in $\mathbb{R}^d$ (\Cref{theorem:highly-regular-vertex-transitive-graphs-are-globally-rigid}).
Moreover, we construct a connected vertex-transitive graph of degree $d(d+1)-1$ that is not globally rigid in $\mathbb{R}^d$ (\Cref{corollary:example-tightness-contstant}),
showing that $d(d+1)$ is the best constant. Together, these facts imply \Cref{theorem:main-theorem-in-introduction}.
The papers \cite{jackson2007} and \cite{dewar2023} go further in the analysis in the plane
by characterizing when a vertex-transitive graph of degree less than $6$ is globally rigid.
We leave this question to future work.

We start by recalling the definition of this family of graphs.

\begin{definition}
 A graph $G=(V,E)$ is \emph{vertex-transitive} if for every pair of distinct vertices $v,w\in V$ there exists an automorphism $\phi$ of  $G$ such that $\phi(v)=w$.
\end{definition}

One key result about vertex-transitive graphs is that, if they are connected, then they are highly vertex-connected.
\begin{theorem}[{\cite[Theorem~3]{watkins1970}}]\label{lemma:vertex-transitive-implies-regular-and-highly-connected}
    Let $G=(V, E)$ be a connected vertex-transitive graph of degree $\omega$.
    Let $k\in\mathbb{N}_{>0}$ be the maximal number such that $G$ is vertex-$k$-connected, then
    \begin{equation*}
        k \geq \frac{2}{3} \omega.
    \end{equation*}
\end{theorem}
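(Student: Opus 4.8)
The plan is to prove this classical connectivity bound by the method of \emph{atoms} (minimum-size fragments), which is where vertex-transitivity enters decisively. Throughout write $k \defeq \kappa(G)$ for the vertex-connectivity. Since $k \le \omega$ always, and since a complete graph has $k = \omega \ge \tfrac23\omega$, I may assume $G$ is not complete, so that minimum separators exist. Call a set $\emptyset \ne X \subsetneq V$ a \emph{fragment} if $\bigl|N(X)\setminus X\bigr| = k$ and $X \cup N(X) \ne V$; let $A$ be a fragment of minimum cardinality $a \defeq |A|$ (an \emph{atom}), put $S \defeq N(A)\setminus A$ so that $|S| = k$, and set $B \defeq V \setminus (A \cup S)$, which is nonempty and joined to $A$ by no edges.

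First I would record the easy half. For every $v \in A$ all neighbours of $v$ lie in $A \cup S$, so $\omega = \deg_G(v) \le (a-1) + k$, that is
\begin{equation*}
\omega \le a + k - 1.
\end{equation*}
If $a = 1$ this already yields $k \ge \omega \ge \tfrac23\omega$, so I assume $a \ge 2$ from now on. I would also note for later use that every $s \in S$ has a neighbour in both $A$ and $B$: a neighbour in $A$ by the definition of $S = N(A)\setminus A$, and a neighbour in $B$ because otherwise $S \setminus \{s\}$ would still separate $A$ from $B$, contradicting the minimality of $k$.

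The heart of the argument is to convert vertex-transitivity into an \emph{upper} bound on the atom size, namely $a \le 2k - \omega$. Because $\operatorname{Aut}(G)$ acts transitively on $V$, every automorphic image of $A$ is again a fragment of minimum size, hence an atom, and the images cover $V$. A standard submodularity argument for the vertex-boundary function $X \mapsto \bigl|N(X)\setminus X\bigr|$ shows that two distinct atoms cannot cross, so the atoms are pairwise disjoint; by transitivity they therefore form a system of imprimitivity, partitioning $V$ into blocks of common size $a$. Feeding this block structure back into the minimality of $A$ — controlling how the neighbouring atoms must cover the separator $S$, together with the two-sided neighbour property above — one extracts $a + \omega \le 2k$. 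Combining with the degree inequality gives $\omega + 1 \le a + k \le (2k - \omega) + k$, hence $3k \ge 2\omega + 1$ and in particular $k \ge \tfrac23\omega$, which is even slightly stronger than the stated bound.

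The main obstacle is precisely this upper bound $a \le 2k - \omega$. The two purely local facts — the degree inequality and the crude edge count across $S$ — are individually far too weak: one checks on the extremal lexicographic products $C_n[K_a]$, where $\omega = 3a-1$, $k = 2a$, and the ratio $k/\omega \to \tfrac23$, that these estimates hold with wide slack. Thus vertex-transitivity must be used in an essential, non-local way through the imprimitivity-block structure of the atoms. I expect the careful bookkeeping that governs how the neighbouring atoms fill $S$ to be the delicate part, whereas the reduction to the non-complete case, the degree inequality, and the closing arithmetic are all routine.
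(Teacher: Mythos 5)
The paper never proves this statement; it is imported directly from Watkins (1970), so there is no internal proof to compare against and your attempt must stand on its own. Your overall strategy is indeed the classical Watkins/Mader atom method, and several pieces are correct and complete: the reduction to non-complete $G$, the degree bound $\omega \le a + k - 1$, the disposal of the case $a=1$, the two-sided neighbour property of $S$, and the closing arithmetic. But there is a genuine gap at the one step that carries all the weight: the inequality $a \le 2k - \omega$ is asserted, not proved, and you explicitly defer it as ``the delicate part.'' Moreover, the ingredients you list do not suffice to derive it. Pairwise disjointness of atoms plus the block-of-imprimitivity structure says nothing about how the separator $S \defeq N(A)$ meets the atoms: a priori $S$ could contain pieces of many atoms. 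What is needed is the stronger form of Mader's lemma — an atom meeting a \emph{fragment} is contained in that fragment — applied to the fragments $A$ and $B$, which shows that any atom meeting $S$ meets neither $A$ nor $B$ and hence lies inside $S$. Only then is $S$ a disjoint union of atoms, giving $k = ma$ for an integer $m \ge 1$.

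Even granting that, your bound still fails when $m=1$: from $\omega \le a(m+1)-1$ and $k = ma$ one gets $2k - a - \omega \ge a(m-2)+1$, which is positive only for $m \ge 2$; if $m=1$ (so $k=a$ and possibly $\omega = 2a-1$), the claim $a \le 2k-\omega$ is simply false for that configuration. So the real crux is ruling out $m=1$, and this is exactly where the two-sided neighbour property must be deployed in an argument you never give. It runs as follows: if $S$ were a single atom $A'$, then $N(A')$ has size $k=a$ and is itself a disjoint union of atoms of size $a$, hence a single atom; it meets $A$ (because $A' = N(A)$, so there are $A$--$A'$ edges), so by disjointness of atoms $N(A') = A$; but every vertex of $S$ has a neighbour in $B$, so $N(A')$ meets $B$, contradicting $A \cap B = \emptyset$. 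With these two steps supplied, your chain $\omega + 1 \le a + k \le (2k-\omega) + k$ closes the proof (indeed yielding $3k \ge 2\omega + 2$ when $m \ge 2$); without them, the central claim is unsupported and the proof is incomplete.
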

In \cite{villanyi2023}, Vill\'anyi proved that a vertex-$d(d+1)$-connected graph is globally rigid in $\mathbb{R}^d$, thus verifying Dewar's conjecture.
\begin{corollary}\label{corollary:dewar-conjecture-through-villanyi-result}
    Fix $d\in\mathbb{N}$ with $d\geq 1$.
    If $G$ is a connected vertex-transitive graph of regularity at least $\frac{3}{2}d(d+1)$ then $G$ is globally rigid in $\mathbb{R}^d$.
\end{corollary}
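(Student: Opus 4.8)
The plan is to derive \Cref{corollary:dewar-conjecture-through-villanyi-result} as an essentially immediate consequence of combining Villányi's result on vertex-connectivity with Watkins' connectivity bound for vertex-transitive graphs (\Cref{lemma:vertex-transitive-implies-regular-and-highly-connected}). The key point is that the corollary is a statement whose proof requires no new machinery: it is a straightforward chaining of two quoted theorems, with only an arithmetic verification to glue them together.

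First I would make explicit the input from \cite{villanyi2023} that is being invoked, namely that a vertex-$d(d+1)$-connected graph is globally rigid in $\mathbb{R}^d$. (This is stated in the prose just above the corollary, so I would cite it directly rather than reprove it.) Then, given a connected vertex-transitive graph $G$ of degree $\omega \geq \tfrac{3}{2} d(d+1)$, I would apply \Cref{lemma:vertex-transitive-implies-regular-and-highly-connected} to obtain that $G$ is vertex-$k$-connected with
\begin{equation*}
    k \geq \frac{2}{3}\omega \geq \frac{2}{3}\cdot\frac{3}{2}d(d+1) = d(d+1).
\end{equation*}
Hence $G$ is vertex-$d(d+1)$-connected, and Villányi's theorem applies to conclude that $G$ is globally rigid in $\mathbb{R}^d$.

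The only subtlety worth addressing is the degenerate small-dimension behaviour. For $d = 1$ one has $d(d+1) = 2$, and I would check that the cited results still apply in this regime or handle it separately, since global rigidity in $\mathbb{R}^1$ is equivalent to $2$-connectivity (so a connected vertex-transitive graph of degree at least $3$, giving $k \geq 2$, is indeed globally rigid). I expect the main obstacle to be purely expository rather than mathematical: ensuring that the hypotheses of Villányi's connectivity result are stated in a form that the corollary can legitimately invoke, and confirming that $\tfrac{3}{2}d(d+1)$ is an integer threshold being compared correctly against an integer degree (which it need not be, so the inequality $\omega \geq \tfrac{3}{2}d(d+1)$ together with $k \geq \tfrac{2}{3}\omega$ should be traced carefully to guarantee $k \geq d(d+1)$ exactly, as done above). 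No genuinely hard step arises; the corollary is recorded precisely to motivate the improvement to the tight constant $d(d+1)$ carried out in the remainder of the section.
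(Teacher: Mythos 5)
Your proposal is correct and matches the paper's (implicit) argument exactly: the corollary is stated as an immediate consequence of chaining Watkins' bound (\Cref{lemma:vertex-transitive-implies-regular-and-highly-connected}) with Vill\'anyi's theorem that vertex-$d(d+1)$-connected graphs are globally rigid in $\mathbb{R}^d$, via the computation $k \geq \frac{2}{3}\omega \geq d(d+1)$. The paper gives no further detail, so your additional care about the $d=1$ case and integrality is a harmless refinement rather than a divergence.
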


The main result of this paper improves on this constant.

\begin{theorem}\label{theorem:highly-regular-vertex-transitive-graphs-are-globally-rigid}
	Fix $d\in\mathbb{N}_{>0}$ and let $G$ be a connected vertex-transitive graph of degree at least $d(d+1)$. Then $G$ is globally rigid in $\mathbb{R}^d$.
\end{theorem}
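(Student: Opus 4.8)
The plan is to argue by contradiction after passing from $G$ to the graph $G^{\ast}\defeq \bigl(V, W_d(G)\bigr)$ obtained by promoting every weakly globally linked pair to an edge. Since any pair $uv\in E(G)$ forces $\|q_u-q_v\|$ to be constant on every framework equivalent to a given one, each edge of $G$ is weakly globally linked, so $E(G)\subseteq W_d(G)$ and $G$ is a spanning subgraph of $G^{\ast}$. By \Cref{theorem:global-rigidity-through-weak-global-linkedness}, $G$ is globally rigid in $\mathbb{R}^d$ if and only if $W_d(G)=E(K_V)$, i.e.\ if and only if $G^{\ast}=K_V$; so it suffices to prove $G^{\ast}=K_V$. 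The case $d=1$ I would dispatch separately: here $d(d+1)=2$, so \Cref{lemma:vertex-transitive-implies-regular-and-highly-connected} yields $2$-connectivity, which is equivalent to global rigidity on the line. From now on assume $d\geq 2$.

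First I would record the structural properties of $G^{\ast}$. It is connected, since it contains the connected graph $G$. By \Cref{lemma:weak-global-linkedness-is-orbit-closed} the set $W_d(G)$ is a union of $\mathrm{Aut}(G)$-orbits, so every automorphism of $G$ is an automorphism of $G^{\ast}$; as $\mathrm{Aut}(G)$ is transitive on $V$, the graph $G^{\ast}$ is vertex-transitive, hence regular of degree at least $\deg_G\geq d(d+1)$. Applying \Cref{lemma:vertex-transitive-implies-regular-and-highly-connected} to $G^{\ast}$ gives vertex-connectivity at least $\tfrac{2}{3}d(d+1)\geq d+1$, where the inequality holds precisely because $d\geq 2$; thus $G^{\ast}$ is vertex-$(d+1)$-connected. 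The decisive input, which I would take from the theory of \cite{jordan2024}, is that adjoining a weakly globally linked edge leaves the family of weakly globally linked pairs unchanged; iterating over the finitely many pairs of $W_d(G)\setminus E(G)$ yields $W_d(G^{\ast})=W_d(G)=E(G^{\ast})$. In other words, \emph{all weakly globally linked pairs of $G^{\ast}$ are adjacent in $G^{\ast}$}, which is exactly the hypothesis that drives the machinery of \cite{villanyi2023}.

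Now suppose, for contradiction, that $G^{\ast}\neq K_V$. If some, hence every, neighbourhood $N_{G^{\ast}}(v)$ were a clique, then in a connected vertex-transitive graph this would force $G^{\ast}$ to be complete, contradicting $G^{\ast}\neq K_V$; so $N_{G^{\ast}}(v)$ is not a clique. Since $G^{\ast}$ is vertex-$(d+1)$-connected and all its weakly globally linked pairs are adjacent, \Cref{lemma:maximal-neighbour-cliques-when-all-weakly-globally-linked-pairs-are-adjacent} shows that any two distinct maximal cliques of $G^{\ast}\bigl[N_{G^{\ast}}(v)\bigr]$ meet in fewer than $d-1$, i.e.\ at most $d-2$, vertices. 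Hence $G^{\ast}$ satisfies all three hypotheses of \Cref{theorem:expected-edges-of-pi-construction-for-low-clique-graphs} (degree at least $d(d+1)$, non-clique neighbourhoods, and the clique-intersection bound), so at least one ordering-induced subgraph of $G^{\ast}$ fails to be $\mathcal{R}_d$-independent. On the other hand, because all weakly globally linked pairs of $G^{\ast}$ are adjacent, \Cref{lemma:indepndence-of-pi-construction-globally-rigid-case} shows that \emph{every} ordering-induced subgraph of $G^{\ast}$ is $\mathcal{R}_d$-independent. This contradiction forces $G^{\ast}=K_V$, whence $G$ is globally rigid.

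I expect the main obstacle to be justifying the closure step $W_d(G^{\ast})=W_d(G)$: one must ensure that enlarging $G$ by its weakly globally linked pairs neither destroys the combinatorial hypotheses nor, more importantly, manufactures new weakly globally linked pairs that would spoil the equality $W_d(G^{\ast})=E(G^{\ast})$, since that equality is precisely what lets the two opposing $\pi$-subgraph lemmas collide. The remainder is bookkeeping that is tight exactly at the stated bound: the connectivity estimate $\tfrac{2}{3}d(d+1)\geq d+1$ holds for $d\geq 2$, and the clique-intersection bound $<d-1$ of \Cref{lemma:maximal-neighbour-cliques-when-all-weakly-globally-linked-pairs-are-adjacent} matches the hypothesis $\leq d-2$ of \Cref{theorem:expected-edges-of-pi-construction-for-low-clique-graphs} verbatim, which is what makes the degree threshold $d(d+1)$ the natural one for this argument.
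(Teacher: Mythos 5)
Your proposal runs on exactly the same engine as the paper's proof: produce a connected vertex-transitive graph of degree at least $d(d+1)$ in which \emph{every} weakly globally linked pair is adjacent, then collide \Cref{lemma:indepndence-of-pi-construction-globally-rigid-case} (all ordering-induced subgraphs independent) with \Cref{theorem:expected-edges-of-pi-construction-for-low-clique-graphs} (some ordering-induced subgraph dependent), checking the hypotheses of the latter via \Cref{lemma:all-neighbors-are-cliques-implies-complete-connected-components}, \Cref{lemma:vertex-transitive-implies-regular-and-highly-connected} and \Cref{lemma:maximal-neighbour-cliques-when-all-weakly-globally-linked-pairs-are-adjacent}. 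The one genuine divergence is how that graph is manufactured, and it sits exactly at the step you yourself flagged. You pass to the one-step closure $G^{\ast}=\bigl(V, W_d(G)\bigr)$ and need $W_d(G^{\ast})=W_d(G)$, i.e.\ that adjoining all weakly globally linked pairs creates no new ones. Nothing quoted in this paper gives you that: \Cref{theorem:global-rigidity-through-weak-global-linkedness} is only the characterization of global rigidity, and the easy monotonicity of weak global linkedness under edge addition yields $W_d(G)\subseteq W_d(G^{\ast})$ --- the wrong direction. What you are invoking is strictly stronger than the statement the paper actually leans on at the corresponding point, namely that adding an edge between a weakly globally linked pair does not change whether the graph is globally rigid (this preservation statement is the real content of \cite[Lemma~3.2]{jordan2024} standing behind \Cref{theorem:global-rigidity-through-weak-global-linkedness}); your version would have to be verified separately and is not among the results reproduced here.

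The paper sidesteps the need for your stronger claim with an extremal argument: among all non-globally-rigid vertex-transitive graphs on the fixed vertex set $V$ with degree at least $d(d+1)$, it picks one, $G$, with the maximum number of edges. If some weakly globally linked pair $\{u,v\}$ were non-adjacent, then $G+O_G(\{u,v\})$ would again be vertex-transitive (\Cref{lemma:weak-global-linkedness-is-orbit-closed}) with strictly more edges and still not globally rigid --- this uses only the preservation statement, applied edge by edge along the orbit --- contradicting maximality. Hence all weakly globally linked pairs of $G$ itself are adjacent, and the machinery is applied to $G$ directly. Your argument admits the same economy in two ways: either adopt this maximality trick, or iterate your closure, $G_0\defeq G$ and $G_{i+1}\defeq\bigl(V, W_d(G_i)\bigr)$; by finiteness the chain stabilizes at some $G_N$ with $W_d(G_N)=E(G_N)$, each $G_i$ remains connected and vertex-transitive of degree at least $d(d+1)$, your collision argument forces $G_N=K_V$, and global rigidity transfers back down the chain edge by edge via the preservation statement. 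With either repair, the rest of your write-up --- the separate (and, compared to the paper, more explicit) treatment of $d=1$, the vertex-transitivity of the closure via $\mathrm{Aut}(G)\subseteq\mathrm{Aut}(G^{\ast})$, the bound $\tfrac{2}{3}d(d+1)\geq d+1$ for $d\geq 2$, and the verification of the three hypotheses of \Cref{theorem:expected-edges-of-pi-construction-for-low-clique-graphs} --- is correct and matches the paper step for step.
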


The following is an easy observation that is nevertheless essential in the proof of the theorem.
\begin{lemma}\label{lemma:all-neighbors-are-cliques-implies-complete-connected-components}
    Let $G=(V, E)$ be a graph and assume that for every $v\in V$ the set $N_G(v)$ induces a clique in $G$.
    Then every connected component of $G$ is a complete graph.
\end{lemma}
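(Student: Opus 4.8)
The plan is to reduce the statement to showing that the relation ``equal or adjacent'' on $V$ is an equivalence relation whose classes are precisely the connected components of $G$; since any two distinct related vertices are then adjacent, each component will be a clique on its own vertex set, i.e.\ a complete graph.

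First I would record the only consequence of the hypothesis that the proof needs: if $uv\in E$ and $vw\in E$ with $u\neq w$, then $uw\in E$. This holds because $u$ and $w$ are distinct vertices of $N_G(v)$, which induces a clique. Informally, adjacency becomes transitive through a common neighbour, and this is the single point at which the assumption on neighbourhoods is used.

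Next I would define a relation $R$ on $V$ by $uRw$ if and only if $u=w$ or $uw\in E$. Reflexivity and symmetry are immediate, the latter because $G$ is undirected. For transitivity I would assume $uRv$ and $vRw$, dispose of the cases in which two of the three vertices coincide (where $uRw$ is trivial), and in the remaining case apply the observation above to $uv,vw\in E$ to obtain $uw\in E$. Thus $R$ is an equivalence relation in which any two distinct related vertices are adjacent.

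Finally I would identify the $R$-classes with the connected components. Related vertices are equal or joined by an edge, hence lie in a common component, so each $R$-class is contained in a single component; conversely, for two vertices of the same component I would walk along a path joining them, observe that consecutive vertices are $R$-related, and use transitivity to conclude that the endpoints are related. Hence each connected component coincides with a single $R$-class, in which all distinct vertices are pairwise adjacent; that is, every connected component is a complete graph. The argument is elementary throughout, and the only step needing a little care is the transitivity check, where the degenerate cases of coinciding vertices must be separated out before the clique hypothesis can be invoked.
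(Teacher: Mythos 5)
Your proof is correct. Note that the paper actually offers no proof of this lemma at all — it is stated as ``an easy observation'' and left to the reader — so there is nothing to compare against; your argument fills the gap cleanly. The equivalence-relation packaging (``equal or adjacent'' is transitive because $u,w\in N_G(v)$ forces $uw\in E$, and its classes are exactly the components by walking along paths) is just the standard induction-on-path-length argument in a tidy form, and you correctly isolate the one delicate point, namely separating the degenerate cases where two of the three vertices coincide before invoking the clique hypothesis.
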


\begin{proof}[Proof of \Cref{theorem:highly-regular-vertex-transitive-graphs-are-globally-rigid}]
    Let $V$ be a finite set.
    Suppose for a contradiction that there exists a vertex-transitive graphs on $V$ of degree at least $d(d+1)$ that is not globally rigid in~$\mathbb{R}^d$.
    Since there are finitely many graphs on $V$, we can take one such graph that has the maximal number of edges. We denote it $G=(V, E)$.
    Notice that our assumption on the degree of $G$ implies that \Cref{item:minimal-degree-probabilistic-lemma} of \Cref{theorem:expected-edges-of-pi-construction-for-low-clique-graphs} holds for $G$.

    First, we show that all weakly globally linked pairs of vertices in $G$ are adjacent.
    Indeed, if for a contradiction $u,v\in V$ are weakly globally linked but not adjacent,
    then every edge in the orbit of $\{u, v\}$ under the automorphisms group of $G$, that we denote by $O_G(e)$, is weakly globally linked (\Cref{lemma:weak-global-linkedness-is-orbit-closed}).

    Since $G$ is not globally rigid,
    by \Cref{theorem:global-rigidity-through-weak-global-linkedness} neither $G+O_{G}(e)$ is globally rigid.
    However, the graph $G+O_{G}(e)$ is again a vertex-transitive graph on $V$ of degree at least $d(d+1)$,
    but with more edges than~$G$.
    This contradicts the maximality assumption on the cardinality of~$E$, and proves that all weakly globally linked pairs of vertices in $G$ are adjacent. By \Cref{lemma:indepndence-of-pi-construction-globally-rigid-case}, we conclude that all ordering induced subgraphs of $G$ are independent in the rigidity matroid.

    Secondly, for every $v\in V$, the set $N_G(v)$ does not induce a clique.
    Indeed, since $G$ is vertex-transitive, if one neighborhood induces a clique, all neighborhoods induce a clique and $G$ is a complete graph (\Cref{lemma:all-neighbors-are-cliques-implies-complete-connected-components}).
    Since complete graphs are globally rigid, this would yield a contradiction.
    We have proved that \Cref{item:no-neighborhood-cliques-probabilistic-lemma} of \Cref{theorem:expected-edges-of-pi-construction-for-low-clique-graphs} holds for~$G$.
    Finally, since all weakly globally linked pairs are adjacent in $G$ in~$\mathbb{R}^d$,
    we have that all ordering-induced subgraphs of~$G$ are independent in $\mathbb{R}^d$ (\Cref{lemma:indepndence-of-pi-construction-globally-rigid-case}).
    The fact that all weakly globally linked pairs are adjacent also implies, by \Cref{lemma:maximal-neighbour-cliques-when-all-weakly-globally-linked-pairs-are-adjacent},
    that \Cref{item:maximal-cliques-in-neighborhood-probabilistic-lemma} of \Cref{theorem:expected-edges-of-pi-construction-for-low-clique-graphs} holds for $G$.
    Notice that we can apply \Cref{lemma:maximal-neighbour-cliques-when-all-weakly-globally-linked-pairs-are-adjacent} because the vertex connectivity of $G$ is greater than \begin{equation*}
        \frac{2}{3} d(d+1) \geq d+1,
    \end{equation*}
    assuming that $d>1$ (\Cref{lemma:vertex-transitive-implies-regular-and-highly-connected}).

    On the one hand, we have proved that all ordering-induced subgraphs of~$G$ are independent in $\mathbb{R}^d$. On the other hand, we have verified that the conditions of \Cref{theorem:expected-edges-of-pi-construction-for-low-clique-graphs} are satisfied by $G$, which implies in particular that at least one ordering-induced subgraph of $G$ is dependent in the rigidity matroid, a contradiction.
\end{proof}

\begin{remark}
	The analogous result of \Cref{theorem:highly-regular-vertex-transitive-graphs-are-globally-rigid} for Cayley graphs can also be proved independently, by considering the orbit of an edge under the action of the corresponding group and proving that adding such an orbit yields yet another Cayley graph.
\end{remark}

\begin{remark}
 The outline contained in the proof of \Cref{theorem:highly-regular-vertex-transitive-graphs-are-globally-rigid} is essentially identical to the outline of the proof of the main theorem of \cite{villanyi2023}. What we have done is simply observe that the main steps also work when dealing with vertex-transitive graphs.
\end{remark}

\begin{remark}
	The lexicographic product of two graphs is a new graph whose vertex set is the Cartesian product of the vertex sets of the starting graphs. If $G$ and $H$ are graphs, a pair of vertices $(g, h), (g', h')\in V(G)\times V(H)$ is adjacent in the lexicographic product of $G$ and $H$ if either $\{g, g'\} \in E(G)$ or $g=g'$ and  $\{h, h'\} \in E(H)$.
By considering the lexicographic product between circuits and complete graphs, it is possible to find a family of vertex-transitive graphs whose global rigidity is implied by \Cref{theorem:highly-regular-vertex-transitive-graphs-are-globally-rigid} but not by the results in \cite{villanyi2023}.
\end{remark}

To prove the tightness of the constant for vertex-transitive graphs, we can generalize \cite[Theorem~2.2.d]{jackson2007}.
\begin{proposition}\label{proposition:tightness-vertex-transitive}
Fix $d\in\mathbb{N}$ with $d\geq 2$ and let $k\defeq d(d+1)-1$. Let $G$ be a $k$-regular vertex-transitive graph and assume it contains a subgraph $H$ such that:
\begin{enumerate}
    \item $H$ is spanning, namely $V(H) = V(G)$;
    \item there exists $s\in\mathbb{N}$ with $s\geq d(d+1)$, and such that $H$ consists of $s$ disjoint copies of a complete graph on $k$ vertices.
\end{enumerate}
Then $G$ is not globally rigid in $\mathbb{R}^d$.
\end{proposition}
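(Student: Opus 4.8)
The plan is to prove that $G$ violates the classical necessary condition of \emph{redundant rigidity} for generic global rigidity, and then conclude. First I would unpack the structure forced by the hypotheses. Write $C_1,\dots,C_s$ for the $s$ vertex-disjoint copies of $K_k$ making up the spanning subgraph $H$, so that $V(G)$ is their disjoint union and $n\defeq|V(G)|=sk$. Every vertex lies in a unique block $C_i$, inside which it has degree $k-1$; since $G$ is $k$-regular, each vertex is incident to exactly one edge of $G$ outside $H$. Hence $M\defeq E(G)\setminus E(H)$ is $1$-regular, i.e.\ a perfect matching with $|M|=sk/2$, and because each $C_i$ is already complete every edge of $M$ joins two \emph{distinct} blocks. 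As $s\geq d(d+1)\geq 6$, there are at least two blocks and $M\neq\emptyset$.

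Fix a matching edge $e\in M$; the heart of the argument is to show that $G-e$ is \emph{not} rigid in $\mathbb{R}^d$. The idea is to count degrees of freedom with each complete block treated as a rigid body. Since $C_i=K_k$ is complete and, for generic $p$, its $k\geq d+1$ points affinely span $\mathbb{R}^d$, any realization of $G-e$ equivalent to $p$ restricts on $C_i$ to a realization congruent to $p|_{C_i}$; for realizations near $p$ this congruence is a unique orientation-preserving isometry $g_i$. Thus, modulo a global isometry, the realizations of $G-e$ near $p$ are cut out inside $\mathrm{SE}(d)^{s-1}$ — a variety of dimension $(s-1)\binom{d+1}{2}$, where $\mathrm{SE}(d)$ is the group of orientation-preserving isometries of $\mathbb{R}^d$ — by one polynomial (squared-distance) equation for each of the $|M|-1=sk/2-1$ matching edges surviving in $G-e$. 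By the affine dimension theorem the component through the trivial solution has dimension at least
\begin{equation*}
(s-1)\binom{d+1}{2}-\left(\frac{sk}{2}-1\right)=\frac{s-d(d+1)}{2}+1\geq 1,
\end{equation*}
where I used $k=d(d+1)-1$ and $s\geq d(d+1)$. A positive-dimensional family of pairwise non-congruent realizations equivalent to $p$ is precisely a nontrivial finite flex, so $G-e$ is flexible.

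Since deleting the single edge $e$ destroys rigidity, $G$ is not redundantly rigid. Redundant rigidity is necessary for generic global rigidity in every dimension (a theorem of Hendrickson), and $G$ has $sk>d+1$ vertices, so $G$ is not globally rigid in $\mathbb{R}^d$, as required.

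I expect the main obstacle to be the rigorous justification of the counting step: one must verify that, near a generic $p$, the equivalent realizations of $G-e$ are faithfully parametrized by the body-configuration variety of dimension $(s-1)\binom{d+1}{2}$ with exactly one scalar equation per surviving matching edge — in particular that each complete block contributes exactly $\binom{d+1}{2}$ internal degrees of freedom (using that its $k\geq d+1$ generic points affinely span $\mathbb{R}^d$, so that $g_i$ is unique). Once this is in place, the Maxwell-type inequality becomes an honest lower bound on the dimension of the flex, and the remaining ingredients — the structural description of $G$, the affine dimension estimate, and Hendrickson's condition — are routine.
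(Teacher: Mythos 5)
Your overall strategy mirrors the paper's: identify the perfect matching $M = E(G)\setminus E(H)$, delete one matching edge $e$, show that $G-e$ is not rigid by a degrees-of-freedom count in which each block $K^i$ is a rigid body with $\binom{d+1}{2}$ degrees of freedom and each surviving matching edge removes at most one, and finish with Hendrickson's condition. Your deficiency count $(s-1)\binom{d+1}{2} - \bigl(\tfrac{sk}{2}-1\bigr) = \tfrac{s-d(d+1)}{2}+1$ is exactly the slack in the paper's inequality. The difference lies in how the count is made rigorous: the paper bounds the rank of $E\setminus\{e\}$ in the rigidity matroid via the covering bound of \Cref{lemma:cardinality-upper-bound-thorugh-partition-submodular-function}, whereas you argue directly on the solution set inside $\mathrm{SE}(d)^{s-1}$.

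There is a genuine gap in your version: the ``affine dimension theorem'' you invoke --- that $c$ polynomial equations on an $N$-dimensional variety cut every nonempty component down in dimension by at most $c$ --- holds over algebraically closed fields and is \emph{false} over $\mathbb{R}$. A single real equation can cut out a point ($x^2+y^2=0$ in $\mathbb{R}^2$), and even equations defining honest hypersurfaces can intersect in excess codimension: the paraboloids $z=x^2+y^2$ and $z=-x^2-y^2$ in $\mathbb{R}^3$ meet in a single point. So your count does not show that the real solution set through the trivial solution is positive-dimensional; a priori it could be that single point, and no flex is produced. (Complexifying does not immediately help either, since a positive-dimensional complex component may contain no nontrivial real points.) Note also that the obstacle you flag --- faithfulness of the body parametrization near $p$ --- is not where the trouble is: to produce a flex you only need the easy direction, namely that every solution in $\mathrm{SE}(d)^{s-1}$ yields a realization equivalent to $p$. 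The standard repair is to linearize: the matching-length map $F\colon \mathrm{SE}(d)^{s-1}\to\mathbb{R}^{|M|-1}$ satisfies $\dim\ker dF \geq N-c \geq 1$ at the identity by rank--nullity; a nonzero kernel element assigns to each block an infinitesimal isometry and hence gives an infinitesimal flex of $G-e$ at $p$, which is nontrivial because the $k\geq d+1$ generic points of each block affinely span $\mathbb{R}^d$. This shows the rigidity matrix of $G-e$ at the generic $p$ has rank strictly below $d|V(G)|-\binom{d+1}{2}$, i.e.\ $G-e$ is not rigid in the paper's (matroid) sense. That linearized argument is, in substance, exactly the rank count the paper performs with \Cref{lemma:cardinality-upper-bound-thorugh-partition-submodular-function}; with this replacement your proof goes through.
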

In the proof we will need the following lemma.
\begin{lemma}[{\cite[Lemma~2.2]{villanyi2023}}]\label{lemma:cardinality-upper-bound-thorugh-partition-submodular-function}
	Fix $d\in\mathbb{N}$ with $d\geq 2$ and let $G=(V, E)$ be a graph. Let $r$ be the rank function of the rigidity matroid on $V$ in $\mathbb{R}^d$. If $E_0, \dots, E_k \subseteq E$ are such that $E=\cup_{i=0}^{k} E_i$ and $\bigl|V(E_i)\bigr|\geq d+1, \, \forall i = 0,\dots, k$ then \begin{equation*}
		r(E) \leq |E_0| + \sum_{i=1}^{k} \left(d |V(E_i)| - \binom{d+1}{2}\right)
	\end{equation*}
\end{lemma}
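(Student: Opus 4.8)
The plan is to prove the rank upper bound in \Cref{lemma:cardinality-upper-bound-thorugh-partition-submodular-function} by induction on $k$, using the submodularity of the rank function $r$ of the rigidity matroid together with the fact that a set $F$ of edges spanning a vertex set with $|V(F)| \geq d+1$ satisfies the Maxwell-type bound $r(F) \leq d|V(F)| - \binom{d+1}{2}$. The two standard ingredients are: first, that $r$ is submodular, meaning $r(A \cup B) + r(A \cap B) \leq r(A) + r(B)$ for all edge sets $A, B$; and second, that for any edge set $F$ with at least $d+1$ spanned vertices one has $r(F) \leq d|V(F)| - \binom{d+1}{2}$, since the rigidity matroid rank of a complete graph on $m \geq d+1$ vertices equals $dm - \binom{d+1}{2}$ and $F$ is a subset of the edges of the complete graph on $V(F)$.

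First I would set up the induction. For the base case $k = 0$ we simply have $E = E_0$, and the claimed bound reads $r(E) \leq |E_0|$, which holds because rank never exceeds cardinality. For the inductive step, I would write $E = E' \cup E_k$ where $E' \defeq \bigcup_{i=0}^{k-1} E_i$, and apply submodularity with $A = E'$ and $B = E_k$ to get
\begin{equation*}
    r(E) \leq r(E') + r(E_k) - r(E' \cap E_k) \leq r(E') + r(E_k).
\end{equation*}
The inductive hypothesis applied to the cover $E' = \bigcup_{i=0}^{k-1} E_i$ bounds $r(E')$ by $|E_0| + \sum_{i=1}^{k-1}\bigl(d|V(E_i)| - \binom{d+1}{2}\bigr)$, and the Maxwell-type bound applied to $E_k$ (which spans at least $d+1$ vertices by hypothesis) gives $r(E_k) \leq d|V(E_k)| - \binom{d+1}{2}$. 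Summing these two estimates yields exactly the desired bound for the cover of $E$, completing the induction.

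The one point that requires care, and which I expect to be the main obstacle, is ensuring the inductive hypothesis applies cleanly: the sets $E_0, \dots, E_{k-1}$ must still cover $E'$ (which they do by construction) and each $E_i$ for $i \geq 1$ must still span at least $d+1$ vertices (which is inherited directly from the hypothesis on the original cover). There is no issue with $E_0$ since it only contributes its cardinality. The asymmetry of the bound, where $E_0$ contributes $|E_0|$ rather than a Maxwell term, is handled automatically by peeling off the sets $E_1, \dots, E_k$ one at a time via submodularity while keeping $E_0$ bundled into the remaining cover until the base case. An alternative, perhaps cleaner presentation would avoid induction entirely by iterating submodularity directly: writing $E = E_0 \cup E_1 \cup \dots \cup E_k$ and repeatedly applying $r(A \cup B) \leq r(A) + r(B)$ gives $r(E) \leq \sum_{i=0}^{k} r(E_i)$, then bounding $r(E_0) \leq |E_0|$ and each $r(E_i) \leq d|V(E_i)| - \binom{d+1}{2}$ for $i \geq 1$. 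I would likely present this direct version since it is more transparent, though the only genuine content is the Maxwell bound on the rigidity-matroid rank of edge sets, which I would cite or recall from the standard theory in \cite{graver1993, jordan2016}.
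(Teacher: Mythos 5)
Your proposal is correct and takes essentially the same approach as the source: the paper does not reprove this statement but imports it as \cite[Lemma~2.2]{villanyi2023}, where the argument is precisely your ``direct version'' --- iterated subadditivity of the matroid rank function giving $r(E)\leq \sum_{i=0}^{k} r(E_i)$, then $r(E_0)\leq |E_0|$ and the Maxwell-type bound $r(E_i)\leq d\bigl|V(E_i)\bigr|-\binom{d+1}{2}$ for $i\geq 1$, the latter because $E_i$ sits inside the edge set of the complete graph on $V(E_i)$, whose rank is $d\bigl|V(E_i)\bigr|-\binom{d+1}{2}$ once $\bigl|V(E_i)\bigr|\geq d+1$. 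The inductive wrapper is sound but unnecessary; the direct presentation you propose at the end is the one to keep.
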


\begin{proof}[Proof of \Cref{proposition:tightness-vertex-transitive}]
    Let $K^1, \dots, K^s$ be complete graphs on $k$ vertices such that \begin{equation*}
        H =\bigcup_{i=1}^s K^i
     \end{equation*}
    Notice that $H$ has $ks$ vertices and it is regular of degree $k-1$.
    Moreover, since $H$ is spanning in $G$, also $G$ has $ks$ vertices.
    Let $E_0$ be the set of edges of $G$ that are not contained in $H$.
    Since each vertex is in some copy $K^i$ and $G$ has degree one more than the degree of $H$,
    each vertex of $G$ is adjacent to exactly one edge in $E_0$.
    This means that
    \begin{equation}\label{equation:tightness-vertex-transitive-eq1}
        |E_0|= \frac{|V(G)|}{2} = \frac{ks}{2}.
    \end{equation}
    Moreover, the edge sets
    \begin{equation*}
        E^i \defeq E(K^i), \forall i\in\{1,\dots, s\},
    \end{equation*}
    together with $E_0$, partition the edge set of $G$. Let $e\in E_0$ and $E_0'\defeq E_0\setminus\{e\}$. We now prove that $G-e$ is not rigid.
    Let $r$ be the rank function of the rigidity matroid on $V$ in~$\mathbb{R}^d$.
    By \Cref{lemma:cardinality-upper-bound-thorugh-partition-submodular-function} we obtain \begin{equation}\label{equation:tightness-vertex-transitive-eq2}
        r\bigl(E\setminus\{e\}\bigr) \leq |E_0'| + \sum_{i=1}^{s} \left(d \cdot \bigl|V(E_i)\bigr| - \binom{d+1}{2}\right).
    \end{equation}
    Notice that to apply the lemma we used the fact that $\bigl|V(E_i)\bigr| \geq k\geq d+1$. Combining \Cref{equation:tightness-vertex-transitive-eq1,equation:tightness-vertex-transitive-eq2}, we obtain
    \begin{align}\label{equation:tightness-vertex-transitive-eq4}
        r\bigl(E\setminus\{e\}\bigr)    &\leq \frac{ks}{2} - 1 + s\left( d k - \binom{d+1}{2}\right)\nonumber\\
                & \leq ks\left(d+\frac{1}{2} - \frac{d(d+1)}{2k} -\frac{1}{ks}\right)
    \end{align}
    On the other hand, we know that $G-e$ is rigid if and only if \begin{equation*}
        r\bigl(E\setminus\{e\}\bigr) = d \bigl|V(G)\bigr| -\binom{d+1}{2} = dks -\binom{d+1}{2}.
    \end{equation*}
    However, since  \begin{align*}
        & dks -\binom{d+1}{2} -ks\left(d+\frac{1}{2} -\frac{d(d+1)}{2k} -\frac{1}{ks}\right) \geq \\
        &\geq  ks\left( d-d-\frac{1}{2}+\frac{d(d+1)}{2k}+\frac{1}{ks}\right) -\frac{d+1}{2}\\
        &\geq  ks \left(\frac{d(d+1)-k}{2k} +\frac{1}{ks}\right) -\binom{d+1}{2} \\
        & \geq \frac{1}{2}s-\binom{d+1}{2}+\frac{1}{2}>0,
    \end{align*}
    as soon as $s\geq d(d+1)$, we conclude that the upper bound of \Cref{equation:tightness-vertex-transitive-eq4} is strictly smaller than the required rank for rigidity.

    We have shown that $G-e$ is not rigid, which implies that $G$ is not redundantly rigid. By Hendrickson's necessary conditions for global rigidity,  we conclude that $G$ is not globally rigid in $\mathbb{R}^d$.
\end{proof}

\begin{corollary}\label{corollary:example-tightness-contstant}
    For all $d\in\mathbb{N}$ with $d\geq 2$, there exists a connected
vertex-transitive graph of degree $d(d+1)-1$ that is not globally rigid
in $\mathbb{R}^d$.
\end{corollary}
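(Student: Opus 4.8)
The plan is to deduce \Cref{corollary:example-tightness-contstant} directly from \Cref{proposition:tightness-vertex-transitive}: for each $d \geq 2$, set $k \defeq d(d+1)-1$, and it then suffices to exhibit a single connected, vertex-transitive, $k$-regular graph $G$ containing a spanning subgraph $H$ that is the disjoint union of $s \geq d(d+1)$ copies of $K_k$. Note that a vertex of such an $H$ already has degree $k-1$ inside its clique, while $G$ is $k$-regular, so $E(G) \setminus E(H)$ is forced to be a perfect matching whose edges join distinct copies. Hence the task is purely constructive: build a connected, vertex-transitive graph that is $s$ disjoint copies of $K_k$ together with one perfect matching between the copies.

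First I would look for a Cayley graph realization. Take a finite group $\Gamma$, a subgroup $K \leq \Gamma$ with $|K| = k$, and an involution $g \in \Gamma \setminus K$, and set $S \defeq (K \setminus \{e\}) \cup \{g\}$ and $G \defeq \mathrm{Cay}(\Gamma, S)$. Then $S$ is symmetric of size $k$, so $G$ is $k$-regular and vertex-transitive; the left cosets $xK$ are vertex sets of disjoint copies of $K_k$ spanning $G$ (they are cliques since $a^{-1}b \in K\setminus\{e\} \subseteq S$ for $xa,xb \in xK$), and the generator $g$ contributes exactly the fixed-point-free involutory matching $x \mapsto xg$, whose edges join distinct copies precisely because $g \notin K$. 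The number of copies is $s = [\Gamma : K]$, which I will want to be at least $k+1 = d(d+1)$, and connectivity of $G$ is equivalent to $\langle S \rangle = \langle K, g\rangle = \Gamma$.

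The step I expect to be the main obstacle is arranging connectivity together with $[\Gamma:K] \geq k+1$. If $\Gamma$ is abelian this is impossible: then $K \trianglelefteq \Gamma$ and the image of $g$ in $\Gamma/K$ has order at most two, so $[\langle K,g\rangle : K] \leq 2$ and $G$ has at least $s/2 \geq 3$ connected components. One therefore has to pass to a non-abelian group, where a subgroup and a single involution can generate much more. A clean choice is $\Gamma = S_{k+1}$ with $K = \langle (1\,2\,\cdots\,k)\rangle \cong \mathbb{Z}_k$ (which fixes $k+1$) and $g = (1,\,k+1)$: conjugating $g$ by the powers of the $k$-cycle produces all transpositions $(i,\,k+1)$ with $1 \leq i \leq k$, and these generate $S_{k+1}$, so $\langle K, g\rangle = \Gamma$ and $G$ is connected. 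Here $g \notin K$ (it moves $k+1$) and $[\Gamma:K] = (k+1)!/k = (k+1)(k-1)! \geq k+1$.

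Once the construction is in place, the remaining verifications are routine: that $S$ is a symmetric generating set of size $k$ with $e \notin S$, that the cosets of $K$ are induced $K_k$'s meeting the matching in no edge, and that $s \geq d(d+1)$. Assembling these shows that $G$ satisfies all hypotheses of \Cref{proposition:tightness-vertex-transitive}, which then yields that $G$ is not globally rigid in $\mathbb{R}^d$, completing the proof.
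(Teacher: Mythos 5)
Your proposal is correct, and it reaches the corollary by a genuinely different construction than the paper's. Both arguments reduce to \Cref{proposition:tightness-vertex-transitive} and therefore need the same combinatorial object: a connected, vertex-transitive, $k$-regular graph ($k = d(d+1)-1$) spanned by $s \geq d(d+1)$ disjoint copies of $K_k$, the leftover edges then automatically forming a perfect matching between the copies. The paper builds the smallest reasonable such graph explicitly: it takes $s = d(d+1)$ cliques on the vertex set $\mathbb{Z}_s \times \mathbb{Z}_k$ (the cliques being the fibers $\{i\}\times\mathbb{Z}_k$) and wires them together with a concrete coordinate-swapping matching, so the example has only $k(k+1)$ vertices, but vertex-transitivity of the resulting graph is a fact that must be checked by hand for this specific matching. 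Your Cayley-graph construction on $\Gamma = S_{k+1}$ with connection set $(K\setminus\{e\})\cup\{g\}$, where $K = \langle (1\,2\,\cdots\,k)\rangle$ and $g = (1,\,k+1)$, trades size for structure: the graph has $(k+1)!$ vertices, but vertex-transitivity is automatic, the clique decomposition into cosets $xK$ and the matching $x \mapsto xg$ are immediate from the group structure, and connectivity reduces to the clean generation fact $\langle K, g\rangle = S_{k+1}$ (conjugating $g$ by powers of the $k$-cycle yields all transpositions through $k+1$). Your observation that abelian groups cannot work here — since then $[\langle K,g\rangle : K]\leq 2$ forces disconnectedness — correctly identifies why the non-abelian choice is essential, and it also explains why the paper's small example, though defined on $\mathbb{Z}_s\times\mathbb{Z}_k$, cannot be realized as a Cayley graph of that abelian group with a single matching generator. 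Your approach in fact substantiates the paper's remark that the Cayley-graph case admits an independent treatment; the paper's approach buys a much smaller witness, which is aesthetically closer to the degree bound being certified as tight.
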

\begin{proof}
 Fix $d\in\mathbb{Z}$ with $d\geq 2$. We illustrate an example in $\mathbb{R}^d$ arising from \Cref{proposition:tightness-vertex-transitive}. Let $s\defeq d(d+1)$ and $k\defeq s-1$. Define \begin{equation*}
        V\defeq \mathbb{Z}_{s}\times \mathbb{Z}_{k},
    \end{equation*}
    and consider the graph $G'$ on vertex set $V$, whose edges are \begin{equation*}
        \bigl\{(i, j),\,\,(j+1, i)\bigr\}, \forall i\in\{1,\dots ,k\} \text{ and } \forall j\in \{i,\dots,  k\}.
    \end{equation*}
    For every $i\in\mathbb{Z}_{s}$, let $K^i$ be the complete graph on $\{i\}\times \mathbb{Z}_{k}$. Finally, define \begin{equation*}
        G\defeq G'+\bigcup_{i\in\mathbb{Z}_{s}} E(K^i).
    \end{equation*}
  The subgraph \begin{equation*}
        \bigcup_{i\in\mathbb{Z}_{s}} K^i
    \end{equation*}
    is the disjoint union of $s$ complete graphs on $k$ vertices, and it is spanning. Since $G$ is also vertex-transitive, by \Cref{proposition:tightness-vertex-transitive} we conclude that $G$ is not globally rigid in $\mathbb{R}^d$.
\end{proof}
\section*{Acknowledgements}
I proved these results while working on my Master's thesis, under the supervision of Matteo Gallet. I am grateful to Matteo for introducing me to the subject, for guiding me through my first research journey and for all the meaningful discussions around this manuscript in particular. I am also very grateful to Sean Dewar for the invaluable feedback they gave me both during the thesis work and on this particular manuscript.

\bibliographystyle{alphaurl}

\bibliography{biblio}

@Article{jordan2024,
 Author = {Jord{\'a}n, Tibor and Vill{\'a}nyi, Soma},
 Title = {Globally linked pairs of vertices in generic frameworks},
 Journal = {Combinatorica},
 ISSN = {0209-9683},
 Volume = {44},
 Number = {4},
 Pages = {817--838},
 Year = {2024},
 DOI = {10.1007/s00493-024-00094-3}
}

@article{villanyi2023,
  author = {Vill{\'a}nyi, Soma},
  title = {Every {{\(d(d + 1)\)}}-connected graph is globally rigid in
{{\(\mathbb{R}^d\)}}},
  journal = {Journal of Combinatorial Theory. Series B},
  issn = {0095-8956},
  volume = {173},
  pages = {1--13},
  year = {2025},
  language = {English},
  doi = {10.1016/j.jctb.2025.01.005}
}

@InCollection{jordan2016,
 Author = {Jord{\'a}n, Tibor},
 Title = {Combinatorial rigidity. {Graphs} and matroids in the theory of rigid frameworks},
 BookTitle = {Discrete geometric analysis},
 ISBN = {978-4-86497-035-8},
 Pages = {33--112},
 Year = {2016},
 Publisher = {Tokyo: Mathematical Society of Japan}
}

@book{graver1993,
 author = {Graver, Jack and Servatius, Brigitte and Servatius, Herman},
 title = {Combinatorial rigidity},
 series = {Graduate Studies in Mathematics},
 issn = {1065-7338},
 volume = {2},
 isbn = {0-8218-3801-6},
 year = {1993},
 publisher = {Providence, RI: American Mathematical Society},
 language = {English},
 
}

@article{jackson2007,
 author = {Jackson, Bill and Servatius, Brigitte and Servatius, Herman},
 title = {The 2-dimensional rigidity of certain families of graphs},
 journal = {Journal of Graph Theory},
 issn = {0364-9024},
 volume = {54},
 number = {2},
 pages = {154--166},
 year = {2007},
 language = {English},
 doi = {10.1002/jgt.20196}
}

@article{dewar2023,
 author = {Dewar, Sean},
 title = {Classifying the globally rigid edge-transitive graphs and distance-regular graphs in the plane},
 journal = {Journal of Graph Theory},
 issn = {0364-9024},
 volume = {103},
 number = {2},
 pages = {175--185},
 year = {2023},
 language = {English},
 doi = {10.1002/jgt.22913}
}

@article{watkins1970,
title = {Connectivity of transitive graphs},
journal = {Journal of Combinatorial Theory},
volume = {8},
number = {1},
pages = {23-29},
year = {1970},
issn = {0021-9800},
doi = {https://doi.org/10.1016/S0021-9800(70)80005-9},
author = {Mark E. Watkins}
}

\end{document}